\theoremstyle{plain}
\newtheorem{Thm}{Theorem}
\newtheorem{Lem}{Lemma}
\theoremstyle{definition}
\newtheorem*{Ack}{Acknowledgment}
\theoremstyle{remark}
\def\Z{\mathbb Z}
\def\C{\mathbb C}
\def\T{\mathbb T}
\def\R{\mathbb R}
\def\P{\mathcal P}
\def\M{\mathfrak M}
\def\m{\mathfrak m}
\def\A{{\mathfrak A}}
\def\a{{\mathfrak a}}
\def\b{{\mathfrak b}}
\def\RR{\mathcal R}
\def\B{\mathcal B}
\def\1{{\bf 1}}
\def\pmod #1{\ ({\rm mod}\ #1)}
\def\floor #1{{\lfloor{#1}\rfloor}}
\begin{document}

\title{Two addition theorems on polynomials of prime variables}
\author{Hongze Li}
\email{lihz@sjtu.edu.cn}
\author{Hao Pan}
\email{haopan79@yahoo.com.cn}
\address{
Department of Mathematics, Shanghai Jiaotong University, Shanghai
200240, People's Republic of China} \subjclass[2000]{Primary
11P32; Secondary 05D99, 11P55}
\thanks{This work was supported by
the National Natural Science Foundation of China (Grant No.
10471090).}
\maketitle

\section{Introduction}
\setcounter{Lem}{0}\setcounter{Thm}{0}\setcounter{Cor}{0}
\setcounter{equation}{0}

Recently, Khalfalah and Szemer\'edi \cite{KhalfalahSzemeredi06} proved the following theorem, which was conjectured by
Erd\H os, Roth, S\'ark\"ozy and S\'os \cite{ErdosSarkozy77}:
\begin{Thm}
Let $\psi$ be a polynomial with integral coefficients and positive
leading coefficient. Suppose that $\psi(1)\psi(0)$ is even. Then
for any $m$-coloring of all positive integers (i.e., partitioning
$\Z^+$ into $m$ disjoint non-empty subsets), there exist
monochromatic distinct $x,y$ such that $x+y=\psi(z)$ for an
integer $z$.
\end{Thm}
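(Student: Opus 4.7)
I would reduce the coloring statement to a density statement and attack the latter by Fourier analysis on $\Z/p\Z$. The density statement reads: for every $\delta>0$ there exists $N_0(\psi,\delta)$ such that any $A\subseteq[1,N]$ with $N\ge N_0$ and $|A|\ge\delta N$ contains distinct $x,y$ and a positive integer $z$ with $x+y=\psi(z)$. The theorem follows from the density version by taking $\delta=1/m$ and pigeonholing the $m$ color classes of any $m$-coloring of $[1,N]$.

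A preliminary reduction uses the parity hypothesis. Since $\psi(0)\psi(1)$ is even, at least one of $\psi(0),\psi(1)$ is even, so either $\psi(2z)$ or $\psi(2z+1)$ is a polynomial (in $z$) with integer coefficients that is always even, has the same degree $d$ as $\psi$, and has positive leading coefficient; call it $\psi_1$. This step is indispensable: if $\psi$ were always odd, the parity 2-coloring would be a counterexample to the statement, so the hypothesis is precisely what removes this obstruction.

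For the density argument, fix a prime $p$ with $2N<p\le 4N$, let $Z\asymp N^{1/d}$ so that $\psi_1([1,Z])\subseteq(0,2N]$, and set $W(n)=\#\{z\in[1,Z]:\psi_1(z)\equiv n\pmod p\}$. Then
\[
R(A):=\#\{(x,y,z)\in A\times A\times[1,Z]:x+y=\psi_1(z)\}=\frac{1}{p}\sum_{\xi\in\Z/p\Z}\bigl|\widehat{\1_A}(\xi)\bigr|^2\overline{\widehat W(\xi)},
\]
whose $\xi=0$ term is the main term $|A|^2Z/p$, of order $\delta^2 N^{1+1/d}$, which dwarfs the diagonal contribution $\#\{(x,z):2x=\psi_1(z)\}\le Z\ll N^{1/d}$.

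The crux is to control $\widehat W(\xi)=\sum_{z=1}^{Z} e_p(\xi\psi_1(z))$ for $\xi\ne 0$. On the minor arcs Weyl's inequality gives $|\widehat W(\xi)|\ll Z^{1-c(d)}$, so by Parseval these frequencies contribute $o(\delta^2 N^{1+1/d})$, smaller than the main term. On the major arcs $|\widehat W(\xi)|$ may approach $Z$, and here I would run a density increment: a dominant major-arc contribution forces $\1_A$ to correlate with a character of small order $q$, yielding an arithmetic progression of common difference $q$ on which $A$ has density at least $\delta(1+c)$; iterating at most $O(1/\delta)$ times either produces solutions directly or yields a contradiction. The principal obstacle is technical: propagating this increment through the polynomial image $\psi_1([1,Z])$ rather than through an arithmetic progression requires a careful Bohr-set analysis adapted to polynomial sequences, which is where the technical heart of the argument lies.
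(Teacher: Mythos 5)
The paper does not supply a proof of this theorem: it is Khalfalah and Szemer\'edi's result, quoted as motivation and as the template for Theorems 1.2 and 1.3. Nevertheless your outline can be assessed on its own, and it has a fatal flaw at the very first step.

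The density statement you reduce to is false. Take $\psi(z)=z^2$, for which $\psi(0)\psi(1)=0$ is even, and let $A=\{n\le N:\ n\equiv1\pmod 3\}$, a set of density $1/3$. For $x,y\in A$ one always has $x+y\equiv2\pmod 3$, whereas $z^2\equiv0$ or $1\pmod 3$ for every integer $z$; so $x+y=z^2$ has no solutions with $x,y\in A$, no matter how large $N$ is. Your preliminary parity reduction to $\psi_1$ clears only the mod-$2$ obstruction; the mod-$3$ obstruction (and the analogous mod-$q$ obstructions for larger $q$) persists and kills the density statement outright. This is precisely the structural difference between the sum problem $x+y=\psi(z)$, which is genuinely a partition-regularity result, and the difference problem $x-y=\psi(z)$ of S\'ark\"ozy and Furstenberg, which really is a density result: in the sum problem the congruence class of $A$ compounds under addition rather than cancelling. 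The density-increment machinery you describe would, on the counterexample above, pass to the subprogression $n\equiv1\pmod 3$ (where $A$ already has density $1$) and then stall; no contradiction ever appears because none exists. To turn the coloring problem into a density problem one must fix the subprogression first, choosing it to be compatible with the congruence constraints of $\psi$ --- this is exactly what the paper does in proving Theorem 1.2, restricting to $x\equiv\psi(b)/2\pmod{KW}$ for a carefully built $b$ and highly composite $KW$ --- and only then pigeonhole the colors, so that the dense color class automatically sits in a residue class where every local obstruction has been arranged away.
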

In particular, if all positive integers are colored with $m$-colors, then there exists
a monochromatic pair $x,y$ with $x\not=y$ such that $x+y$ is a perfect square.

On the other hand,
suppose that $\psi$ is a polynomial with rational coefficients and zero constant term,
in \cite{LiPan2} Li and Pan proved that for any subset $A$ of positive integers with
$$
\limsup_{x\to\infty}\frac{|A\cap[1,x]|}{x}>0,
$$
there exist $x,y\in A$ and a prime $p$ such that $x-y=\psi(p-1)$. This commonly generalizes two
well-known results of Furstenberg \cite{Furstenberg1977} and S\'ark\"ozy \cite{Sarkozy78a, Sarkozy78b}.

Define
$$
\lambda_{b,W}(x)=\begin{cases}
\frac{\phi(W)}{W}\log(Wx+b)&\qquad\text{if }Wx+b\text{ is prime},\\
0&\qquad\text{otherwise},
\end{cases}
$$
where $\phi$ is the Euler totient function and
$$
\Lambda_{b,W}=\{x:\,Wx+b\text{ is prime}\}
$$
for $1\leq b\leq W$ with $(b,W)=1$. In the present paper, our main result is the following theorem:
\begin{Thm}
\label{colorprimepoly} Let $m, b_0, W_0$ be positive integers
satisfying $b_0\leq W_0$ and $(b_0,W_0)=1$. Let $\psi(x)$ be a
polynomial with integral coefficients and positive leading
coefficient satisfying that
$$
\begin{cases}
\psi(1)\text{ or }\psi(0)\text{ is even}&\quad\text{ if }2\mid W_0,\\
\psi(b_0-1)\text{ is even}&\quad\text{ if }2\nmid W_0.
\end{cases}
$$
Suppose that all positive
integers are colored with $m$ colors. Then there exist
distinct monochromatic $x,y$ such that $x+y=\psi(z)$ where $z\in\Lambda_{b_0,W_0}$.
\end{Thm}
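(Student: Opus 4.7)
The plan is to adapt the argument of Khalfalah--Szemer\'edi \cite{KhalfalahSzemeredi06} to the setting where the variable $z$ is restricted to the shifted primes $\Lambda_{b_0, W_0}$, by a $W$-trick and transference argument. By pigeonhole one color class $A$ has upper density at least $1/m > 0$; my aim for each large $N$ is to establish positivity of the weighted count
\begin{equation*}
S(N) := \sum_{1 \leq z \leq N} \lambda_{b_0, W_0}(z)\,\big|\{(x, y) \in A^2 : x + y = \psi(z),\ x \neq y\}\big|,
\end{equation*}
since $S(N) > 0$ directly yields the required monochromatic triple $(x, y, z)$ with $z \in \Lambda_{b_0, W_0}$.

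Next I would decompose $\lambda_{b_0, W_0} = c\cdot\1_R + \nu$, where $R \subseteq \Z$ is a parity class of $z$ on which $W_0 z + b_0$ is odd and $\psi(z)$ is even, $c > 0$ is the mean of $\lambda_{b_0, W_0}$ on $R$ (coming from the prime number theorem in arithmetic progressions), and $\nu$ has mean zero on $R$. The parity hypotheses on $\psi$ precisely ensure that such a class $R$ exists: if $W_0$ is odd, then primality of $W_0 z + b_0 > 2$ forces $z \equiv b_0 - 1 \pmod{2}$, so $\psi(b_0-1)$ even is needed; if $W_0$ is even, either parity of $z$ is allowed and one chooses the parity making $\psi(z)$ even, which exists since $\psi(0)$ or $\psi(1)$ is even. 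The main term in $S(N)$, obtained by replacing $\lambda_{b_0, W_0}$ by $c\cdot\1_R$, is handled by applying the original Khalfalah--Szemer\'edi theorem to the auxiliary polynomial $\tilde{\psi}(w) := \psi(2w + \epsilon_R)$ (where $\epsilon_R \in \{0,1\}$ is the parity defining $R$); since $\tilde{\psi}(w)$ is always even by construction, the parity hypothesis of that theorem is automatically satisfied, and it yields $\gg N^{\deg\psi}$ monochromatic solutions, giving a main-term contribution $\gg N^{\deg\psi}$.

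The main obstacle is to show that the error contribution
\begin{equation*}
E(N) := \sum_{1 \leq z \leq N} \nu(z)\,\big|\{(x, y) \in A^2 : x + y = \psi(z),\ x \neq y\}\big|
\end{equation*}
is of smaller order than the main term, despite $\nu$ being unbounded. For linear $\psi$ this reduces to standard equidistribution of primes in arithmetic progressions. In the general nonlinear case the most direct approach is a Hardy--Littlewood circle-method treatment, with minor-arc bounds on $\sum_{z \leq N} \nu(z)\, e(\alpha \psi(z))$ of Vinogradov--Vaughan type and a major-arc analysis that is absorbed into the $c\cdot\1_R$ main term after the $W$-trick. Alternatively, one could invoke Green--Tao--Ziegler Gowers $U^{d+1}$-norm bounds on the $W$-tricked von Mangoldt function (with $d = \deg \psi$) together with a polynomial generalized von Neumann inequality. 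In either approach, taking $W_0$ (or a larger auxiliary $W$) sufficiently large is crucial to absorb small-prime obstructions and render $\lambda_{b_0, W_0}$ genuinely pseudorandom against the polynomial phase $e(\alpha\psi(z))$.
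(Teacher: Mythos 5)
Your plan has a fundamental gap at the very first step: after choosing, by density pigeonhole, a single color class $A$ of upper density at least $1/m$, you try to prove $S(N)>0$ for that \emph{fixed} $A$ by applying Khalfalah--Szemer\'edi to the main term. But Khalfalah--Szemer\'edi is a \emph{partition} theorem with no density counterpart: a set of positive upper density need not contain any pair $x,y$ with $x+y=\psi(z)$. (For $\psi(z)=z^2$, the set $\{n:n\equiv1\pmod{4}\}$ has density $1/4$, yet $x+y\equiv2\pmod{4}$ for all $x,y$ in it, which is never a square.) Applied to your auxiliary polynomial $\tilde{\psi}$, their theorem produces monochromatic solutions, but these may lie entirely in a color class other than your chosen $A$; it gives no lower bound whatsoever on your main term $c\sum_{z\in R}|\{(x,y)\in A^2:x+y=\psi(z),x\neq y\}|$. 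The coloring structure cannot be discarded after a single pigeonhole, and the proposal stalls before the circle-method error analysis even starts.

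The paper never invokes Khalfalah--Szemer\'edi and runs the transference in the opposite direction from what you propose. It transfers the prime-polynomial set $\{\psi_{b,W}(z):z\in\Lambda_{W_0b+b_0,WW_0}\}$ into $\Z_N$ as a normalized measure $\a$, smooths it to $\a'=\a*\b*\b$ with $\|\a'\|_\infty\leq(1+2\kappa)/N$ (Lemma~\ref{aadashbound}), and observes that the level set $\A=\{x:\a'(x)\geq\kappa/N\}$ then has $|\A|\geq(1-3\kappa)N$, i.e.\ the smoothed prime-polynomial values occupy \emph{almost all} of $\Z_N$. On the other side, the $W$-trick is not a mere parity restriction: it selects a color class dense in the single residue class $\psi(b)/2\pmod{KW}$, so the rescaled $A\subset\Z_N$ has $|A|\geq N/(4mK)$. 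Since $\kappa=10^{-4}/(mK)$, the trivial inequality $2|A|+|\A|>N$ holds, and the elementary counting bound (Lemma~3.3 of \cite{LiPan1}) already produces $\gg_{m,K}N^2$ triples $(x,y,z)\in A\times A\times\A$ with $x+y=z$; Lemma~\ref{minor} then transports this count from $\a'$ back to $\a$. The structural point your plan misses is this reversal of roles: it is the primes that are made dense (near density~$1$) in $\Z_N$, after which \emph{any} positive-density color class in the correct residue class contains a solution, with no density theorem for polynomial sums needed at all.
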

We shall use one of Green's ingredients in his proof of Roth's theorem in primes. The key of Green's
proof is a transference principle (which was greatly developed in \cite{GreenTao}), i.e.,
transferring a subset of primes with positive relative density to a subset
of $\Z_N=\Z/N\Z$ with positive density, where $N$ is a large prime. In the proof of Theorem \ref{colorprimepoly}, we shall
transfer one subset of $\{\psi(z):\,z\in\Lambda_{b,W}\}$
to a subset of $\Z_N=\Z/N\Z$ with the density very close to $1$.

\begin{Thm}
\label{primecolorprimepoly} Let $m, b_0, W_0$ be positive integers
satisfying $b_0\leq W_0$ and $(b_0,W_0)=1$. Let $\psi(x)$ be a
polynomial with integral coefficients and positive leading
coefficient satisfying that
$$
\begin{cases}
\psi(1)\text{ or }\psi(0)\text{ is even}&\quad\text{ if }2\mid W_0,\\
\psi(b_0-1)\text{ is even}&\quad\text{ if }2\nmid W_0.
\end{cases}
$$
Also, suppose that for each prime $p$, there exists $1\leq c_p\leq
p$ such that both $W_0c_p+b_0$ and $\frac{1}{2}\psi(c_p)$ are not
divisible by $p$. Then for any $m$-coloring of all primes, there
exist distinct monochromatic primes $x,y$ such that $x+y=\psi(z)$
where $z\in\Lambda_{b_0,W_0}$.
\end{Thm}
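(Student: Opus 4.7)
The plan is to mirror the transference-principle argument used for Theorem \ref{colorprimepoly}, promoting the pigeonhole step from integers to primes via the $W$-trick. Choose a slowly growing parameter $w=w(N)\to\infty$ and set $W=W_0\cdot\prod_{p\leq w,\,p\nmid W_0}p$, so that $W_0\mid W$. Partition the primes in $[1,N]$ into the $\phi(W)$ reduced residue classes modulo $W$; the $m$-coloring further subdivides each class into $m$ pieces. A double pigeonhole (over classes and colors) produces a reduced residue $b_1\pmod W$ and a color class $A$ of primes with
\[
\sum_{\substack{x\in A\\ x\leq N/W}}\lambda_{b_1,W}(x)\gg_{m,W_0}\frac{N}{W}.
\]
Since $x_1+x_2\equiv 2b_1\pmod W$ for $x_1,x_2\in A$, the equation $x_1+x_2=\psi(z)$ forces $\psi(z)\equiv 2b_1\pmod W$. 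The parity hypothesis on $\psi$ together with the hypothesis on $c_p$ for each $p\mid W$ delivers a compatible residue class for $z$ modulo $W/W_0$, restricting $z$ to a non-empty sub-progression of $\Lambda_{b_0,W_0}$.

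One then runs the transference machinery in parallel on both sides, following the scheme of Theorem \ref{colorprimepoly}. On the $(x_1,x_2)$-side, the function $\1_A\cdot\lambda_{b_1,W}$ is majorized by a pseudorandom (Green--Tao) measure, hence transfers to a bounded function $g\colon\Z_{N'}\to[0,O(1)]$ of mean $\gg 1/m$ with $N'\asymp N/W$. On the $z$-side, I would invoke the key ingredient already developed in the proof of Theorem \ref{colorprimepoly}: the set $\{\psi(z):z\in\Lambda_{b_0,W_0}\}$, restricted to the compatible sub-progression, transfers to a subset of $\Z_{N'}$ of density arbitrarily close to $1$. A Fourier/$U^2$ analysis of the linear equation $x_1+x_2-\psi(z)=0$, applied precisely as in the analogous step for Theorem \ref{colorprimepoly}, yields a count of valid triples of order at least $c(m)\,N^2/W^2$ for all sufficiently large $N$. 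Subtracting the trivial diagonal contribution from $x_1=x_2$, which is $O(N/W)$ and hence lower order, produces a monochromatic pair with $x_1\neq x_2$.

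The crux, and the main obstacle, is the positivity of the singular series: one must show that, for every prime $p$, the local system
\[
x_1+x_2\equiv\psi(z)\pmod p,\qquad p\nmid x_1 x_2(W_0 z+b_0)
\]
is solvable. This is exactly the content of the hypothesis on $c_p$. Taking $z\equiv c_p\pmod p$ guarantees $W_0 z+b_0\not\equiv 0\pmod p$ and $\psi(z)/2\not\equiv 0\pmod p$ (so in particular $\psi(z)\not\equiv 0\pmod p$); for odd $p$ one then splits $\psi(z)\pmod p$ as a sum of two non-zero residues (e.g.\ $x_1\equiv 1$, $x_2\equiv\psi(z)-1$), while the case $p=2$ is governed by the parity hypothesis on $\psi$. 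Once these local factors are non-vanishing, the Fourier/$U^2$ count is strictly positive for all large $N$, completing the proof of Theorem \ref{primecolorprimepoly}.
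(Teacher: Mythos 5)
Your high-level plan (W-trick, transfer both sides to $\Z_N$, Fourier count, local solvability from $c_p$) is in the same spirit as the paper's, but there is a genuine gap in the pigeonhole step that propagates through the rest of the argument.

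You pigeonhole over all $\phi(W)$ reduced classes mod $W$ \emph{and} over colors, producing some class $b_1 \pmod{W}$ and a color $A$ with many primes $\equiv b_1$. You then assert that the parity and $c_p$ hypotheses supply a compatible residue class for $z$, so that $\psi(z)\equiv 2b_1\pmod W$ is solvable. But the $c_p$ hypothesis only guarantees the existence of \emph{some} $c_p$ with $W_0c_p+b_0\not\equiv 0$ and $\psi(c_p)/2\not\equiv 0 \pmod p$; it gives no control over \emph{which} nonzero residue $\psi(z)/2$ hits. Since $\psi$ is a fixed polynomial, the image of $\{\psi(z)/2 : z \equiv b_0 \pmod{W_0},\ (W_0z+b_0,W)=1\}$ modulo $W$ can be a very sparse subset of $(\Z/W\Z)^\times$, and the $b_1$ handed to you by pigeonhole will in general miss it. Your final "singular series" paragraph has the same problem: after the $W$-trick, both $x_1$ and $x_2$ are pinned to $b_1 \pmod p$, so the local condition is $\psi(z)\equiv 2b_1\pmod p$, not merely $\psi(z)\not\equiv 0$, and you cannot freely split $\psi(z)$ as $1+(\psi(z)-1)$.

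The paper reverses the quantifiers: it first \emph{constructs} (via CRT) a single residue $b$ with $W_0b+b_0\equiv b_p \pmod{p^{w+\nu_p(W_0)}}$ for each $p\le w$, where the $b_p$ are chosen to respect the $c_p$ hypothesis for $p\le\Psi$ and to make $\psi'((b_p-b_0)/W_0)\not\equiv0$ for $p>\Psi$; this forces $\psi(b)$ even and makes the target class $\psi(b)/2\pmod{KW}$ provably achievable. Only \emph{then} does it pigeonhole, and only over the $m$ colors inside that fixed class, yielding density $\gg 1/(m\phi(KW))$. Two further points you gloss over are also load-bearing in the paper: the modulus is $W=\prod_{p\le w}p^w$ with \emph{high} prime powers, not squarefree, which is what makes $(\psi'(b),W)=K$ hold exactly and feeds into the exponential-sum bounds underlying Lemma \ref{tildebound}; and the auxiliary integer $K$ (capturing $\gcd(\psi'(b),W)$) appears throughout the restriction and smoothing estimates and in the final choice of $\eta,\epsilon$ — it cannot be suppressed. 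Finally, the paper runs the transference via Bourgain--Green restriction estimates and Bohr-set smoothing (as in Green's Roth-in-primes argument), not via a Green--Tao pseudorandom majorant; the two are related but not interchangeable at the level of detail, and the paper's restriction estimate (Lemma \ref{primerestriction}) is precisely tailored to the H\"older step in Lemma \ref{minorprime}.
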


Let us explain why the existence of $c_p$ is necessary. Assume
that there exists a prime $p$ such that $c_p$ doesn't exist. That
is, for each $1\leq c\leq p$, either $W_0c+b_0$ or
$\frac{1}{2}\psi(c)$ is divisible by $p$. Then we may partition
the set of all primes into $3p$ disjoint sets $X_1,\ldots,X_{3p}$
with
$$
X_{j}= \{x\text{ is prime}:\,x\leq \psi((p-b_0)/W_0)/2, x\equiv
j\pmod{p}\},
$$
$$
X_{p+j}= \{x\text{ is prime}:\,x>\psi((p-b_0)/W_0), x\equiv
j\pmod{p}\}
$$
and
$$
X_{2p+j}= \{x\text{ is prime}:\,\psi((p-b_0)/W_0)/2<x\leq
\psi((p-b_0)/W_0), x\equiv j\pmod{p}\}.
$$
for $j=1,2,\ldots,p$. We claim that for each $1\leq j\leq 3p$, the
set
$$
\{(x,y,z):\, x,y\in X_j, z\in\Lambda_{b_0,W_0}, x\not=y,
x+y=\psi(z)\}
$$
is empty.

In fact, notice that now $p$ divides one of $W_0z+b_0$ and
$\frac{1}{2}\psi(z)$ since $c_p$ doesn't exist. If $p$ divides
$W_0z+b_0$, we must have $W_0z+b_0=p$ since $z\in\Lambda_{b_0,W_0}$.
But it is easy to see that for $1\leq j\leq p$
$$
\max\{x+y:\, x,y\in X_j,
x\not=y\}<2\cdot\psi((p-b_0)/W_0)/2=\psi(z),
$$
and for $p+1\leq j\leq 3p$
$$
\min\{x+y:\, x,y\in X_j,
x\not=y\}>2\cdot\psi((p-b_0)/W_0)/2=\psi(z).
$$
So it is impossible that
$$
\psi(z)\in X_j\dotplus X_j:=\{x+y:\, x,y\in X_j, x\not=y\}
$$
for any $1\leq j\leq 3p$.

On the other hand, suppose that $p$ divides $\frac{1}{2}\psi(z)$.
Note that for any $1\leq j\leq 3p$ and $x,y\in X_j$, $x\equiv
y\equiv j\pmod{p}$. So if $x+y=\psi(z)$, then we must have $x\equiv
y\equiv 0\pmod{p}$. Thus we have $x=y=p$ since  $x, y$ are both
primes. This also concludes that $\psi(z)\not\in X_j\dotplus X_j$
for each $j$.
\section{Proof of Theorem \ref{colorprimepoly}}

Assume that $n$ is a sufficiently large integer, and
$$
\{1,2,\ldots,n\}=X_1\cup\cdots\cup X_m
$$
where $X_i\cap X_j=\emptyset$ if $i\not=j$.
\begin{Lem} Let $p$ be a prime.
Let $h(x)$ be a non-zero polynomial over $\Z_p$.
Suppose that $S\subseteq\Z_p$ and $|S|\geq\deg h+1$. Then there exists $b\in S$ such that $h(b)\not\equiv 0\pmod{p}$.
\end{Lem}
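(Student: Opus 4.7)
The plan is essentially one line: use the fact that $\Z_p$ is a field (since $p$ is prime), so a non-zero polynomial $h(x)\in\Z_p[x]$ of degree $d=\deg h$ has at most $d$ roots in $\Z_p$. I would invoke this directly from elementary algebra (it follows from repeated application of the division algorithm: if $h(a)=0$, then $h(x)=(x-a)q(x)$ with $\deg q=d-1$, and induction on $d$ gives the bound).

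Given this, let $Z=\{a\in\Z_p:h(a)\equiv 0\pmod p\}$. Then $|Z|\leq\deg h$. Since $|S|\geq\deg h+1>|Z|$, the set $S$ cannot be contained in $Z$, so there exists $b\in S\setminus Z$, which is the desired element with $h(b)\not\equiv0\pmod p$.

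There is no real obstacle here; the only subtlety is the use of primality of $p$, which ensures $\Z_p$ is an integral domain so that the root-counting bound holds (the statement would fail over $\Z_n$ for composite $n$, as $x^2-1$ already shows modulo $8$). The hypothesis $|S|\geq\deg h+1$ is tight, as witnessed by $h(x)=\prod_{a\in S}(x-a)$ when $|S|=\deg h$.
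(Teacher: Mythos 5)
Your proof is correct and takes essentially the same approach as the paper: both rest on the fact that a non-zero polynomial over the field $\Z_p$ has at most $\deg h$ roots, so $S$ with $|S|\geq\deg h+1$ cannot be entirely contained in the zero set. Your additional remarks on tightness and the failure for composite moduli are fine but not needed.
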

\begin{proof}
This lemma easily follows from the fact that
$$
|\{x\in\Z_p:\,h(x)=0\}|\leq\deg h,
$$
since $h(x)$ doesn't vanish over $\Z_p$.
\end{proof}
Suppose $\psi(x)=a_1x^k+\cdots+a_kx+a_0$ be a polynomial with
integral coefficients. Let
$\Psi=\max\{(k+1)W_0,|a_1|,\ldots,|a_{k}|\}$. Let $\psi'$ denote
the derivative of $\psi$. Then for any prime $p>\Psi$, by Lemma
2.1, there exists $1\leq b_p\leq p-1$ with $b_p\equiv
b_0\pmod{W_0}$ such that $(\psi'((b_p-b_0)/W_0),p)=1$. And for
each prime $p\leq\Psi$, we may choose $b_p\geq 1$ with $p\nmid
b_p$ such that $b_p\equiv b_0\pmod{W_0}$ and
$\psi'((b_p-b_0)/W_0)>0$. In particular, we may assume that
$\psi((b_2-b_0)/W_0)$ is even if $2\mid W_0$. Let
$$
K=\prod_{\substack{p\text{ prime}\\
p\leq\Psi}}p^{\nu_p(\psi'((b_p-b_0)/W_0))},
$$
where $\nu_p(x)=\max\{v\in\Z:\,p^v\mid x\}$.

Let $\kappa=10^{-4}K^{-1}m^{-1}$.
Let $w=\floor{\log\log\log\log
n}$ and
$$
W=\prod_{\substack{p\text{ prime}\\ p\leq w}}p^w.
$$
Without loss of generality, we may assume that $w\geq\Psi$.
Suppose that $N$ is a prime in the interval
$(2n/W,(2+\kappa)n/W)]$. Thanks to the prime number theorem, such
prime $N$ always exists whenever $n$ is sufficiently large. By the
Chinese remainder theorem, there exists $0\leq b\leq W-1$ such
that for each prime $p\leq w$
$$
W_0b+b_0\equiv b_p\pmod{p^{w+\nu_p(W_0)}},
$$
since $b_p\equiv b_0\pmod{W_0}$. Clearly $(W_0b+b_0,WW_0)=1$. We
claim that $\psi(b)$ is even. In fact, when $W_0$ is odd, $b\equiv
b_2-b_0\equiv b_0-1\pmod{2}$. And if $W_0$ is even, we also have
$2\mid\psi(b)$ since $b\equiv (b_2-b_0)/W_0\pmod{2}$.

Define
$$
\psi_{b,W}(x)=\frac{\psi(Wx+b)-\psi(b)}{W}.
$$
Let $M=\max\{x\in \mathbb{N}:\psi_{b,W}(x)<KN\}$. Let $B$ be a
sufficiently large positive constant (only depending on $k$). Let
$$
\M_{a,q}=\{\alpha\in\T:\,|\alpha q-a|\leq (\log
M)^B/\psi_{b,W}(M)\},
$$
$$
\M=\bigcup_{\substack{1\leq a\leq q\leq (\log
M)^B\\(a,q)=1}}\M_{a,q}
$$
and $\m=\T\setminus\M$, where $\T=\R/\Z$.

\begin{Lem}
\label{primepolymajor}
For $\alpha\in\M_{a,q}$,
\begin{align*}
&\sum_{x=1}^M\psi_{b,W}^\Delta(x-1)\lambda_{W_0b+b_0,WW_0}(x)e(\alpha\psi_{b,W}(x))\\
=&\frac{\phi(WW_0)}{\phi(WW_0q)}\sum_{\substack{1\leq r\leq
q\\(WW_0r+W_0b+b_0,q)=1}}e(a\psi_{b,W}(r)/q)\sum_{x=1}^{\psi_{b,W}(M)}e((\alpha-a/q)\psi_{b,W}(x))\\
&+O(\psi_{b,W}(M)(\log M)^{-B}),
\end{align*}
where $\psi_{b,W}^\Delta(x)=\psi_{b,W}(x+1)-\psi_{b,W}(x)$.
\end{Lem}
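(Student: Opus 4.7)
The plan is to establish Lemma \ref{primepolymajor} by a standard major-arc decomposition in three steps: split the $x$-sum into residue classes modulo $q$, apply the Siegel--Walfisz theorem to replace the prime-detecting weight $\lambda_{W_0b+b_0,WW_0}$ by its expected density $\phi(WW_0)/\phi(WW_0q)$ on each arithmetic progression, and finally use the telescoping identity $\psi_{b,W}^\Delta(x-1)=\psi_{b,W}(x)-\psi_{b,W}(x-1)$ together with $\psi_{b,W}(0)=0$ to convert the resulting weighted sum into the integer sum $\sum_{u=1}^{\psi_{b,W}(M)}e((\alpha-a/q)u)$ appearing in the stated main term.

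Set $\beta:=\alpha-a/q$ and note that $\psi_{b,W}$ has integer coefficients (by expanding $\psi(Wx+b)$ around $b$), so $\psi_{b,W}(x)\equiv\psi_{b,W}(r)\pmod q$ whenever $x\equiv r\pmod q$. The sum factors as $\sum_{r=1}^q e(a\psi_{b,W}(r)/q)\,T_r$, where
$$T_r=\sum_{\substack{1\leq x\leq M\\ x\equiv r\pmod q}}\psi_{b,W}^\Delta(x-1)\,\lambda_{W_0b+b_0,WW_0}(x)\,e(\beta\psi_{b,W}(x)).$$
If $(WW_0r+W_0b+b_0,q)>1$ then only $x\ll(\log M)^B$ contribute (the prime $WW_0x+W_0b+b_0$ must itself divide $q\leq(\log M)^B$), and these boundary terms total $O((\log M)^{O(1)})$, well within the allowed error. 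For the good residues, Siegel--Walfisz applied to primes $\equiv WW_0r+W_0b+b_0\pmod{WW_0q}$ (valid because $WW_0q$ is polylogarithmic in $M$, by the choice of $W$) yields
$$S_r(X):=\sum_{\substack{y\leq X\\ y\equiv r\pmod q}}\lambda_{W_0b+b_0,WW_0}(y)=\frac{\phi(WW_0)}{\phi(WW_0q)}X+O\bigl(Xe^{-c\sqrt{\log X}}\bigr).$$

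Substituting into $T_r$ via Abel summation against $f(x):=\psi_{b,W}^\Delta(x-1)e(\beta\psi_{b,W}(x))$ and using the identity $Mf(M)-\sum_{x=1}^{M-1}x(f(x+1)-f(x))=\sum_{x=1}^{M}f(x)$, I obtain $T_r=\frac{\phi(WW_0)}{\phi(WW_0q)}\sum_{x=1}^M f(x)+O(\psi_{b,W}(M)e^{-c'\sqrt{\log M}})$ for the good $r$. Finally, since $\psi_{b,W}(0)=0$,
$$\sum_{u=1}^{\psi_{b,W}(M)}e(\beta u)-\sum_{x=1}^M f(x)=\sum_{x=1}^M\sum_{u=\psi_{b,W}(x-1)+1}^{\psi_{b,W}(x)}\bigl(e(\beta u)-e(\beta\psi_{b,W}(x))\bigr),$$
bounded termwise by $2\pi|\beta|\psi_{b,W}^\Delta(x-1)$, so the discretization error is $\ll|\beta|\psi_{b,W}^\Delta(M)\psi_{b,W}(M)\ll(\log M)^B\psi_{b,W}(M)/M$, comfortably within $O(\psi_{b,W}(M)(\log M)^{-B})$.

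The main obstacle will be the error bookkeeping in the Abel summation step: propagating the Siegel--Walfisz saving $e^{-c\sqrt{\log X}}$ against the polynomial weight $\psi_{b,W}^\Delta$ (growing like $\psi_{b,W}(M)/M$) and the oscillatory factor $e(\beta\psi_{b,W}(x))$ (total variation $\ll(\log M)^B$), then summing over all $q\leq(\log M)^B$ residue classes without losing the final $(\log M)^{-B}$ target. No new arithmetic input is needed beyond Siegel--Walfisz; the subtlety is purely in matching the polynomial and logarithmic losses against the exponential saving.
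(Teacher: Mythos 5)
The paper does not prove this lemma at all: immediately after stating it, the authors write that ``Lemma~\ref{primepolymajor} is the immediate consequence of Lemmas 2.3 and 2.4 of [LiPan2]'' and defer the argument entirely to that companion preprint. So there is no in-paper proof to compare against, and your task was effectively to reconstruct the argument from scratch.

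Your reconstruction is the standard major-arc treatment and is correct in all essentials. A few remarks. First, you correctly read through an apparent typo in the statement: the final factor should be $\sum_{x=1}^{\psi_{b,W}(M)}e((\alpha-a/q)x)$ rather than $e((\alpha-a/q)\psi_{b,W}(x))$, as one can see from how the lemma is invoked in the proof of Lemma~\ref{tildebound}, and your discretization step produces exactly the linear-phase sum that is actually used there. Second, your handling of the bad residues $(WW_0r+W_0b+b_0,q)>1$ is right: the prime $WW_0x+W_0b+b_0$ would have to equal a prime divisor of $q\leq(\log M)^B$, which pins $x$ down to $O(1)$ choices of size $O((\log M)^B)$, contributing $O(W^{k-1}(\log M)^{kB+1})$ in total; since $W$ is sub-polylogarithmic in $M$ this is comfortably within the error. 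Third, the Abel-summation error requires the Siegel--Walfisz saving to beat the total variation of $f(x)=\psi_{b,W}^\Delta(x-1)e(\beta\psi_{b,W}(x))$, which is $O((\log M)^B)$ because $|\beta|\leq(\log M)^B/(q\psi_{b,W}(M))$ forces the phase to wind at most $O((\log M)^B)$ times; the $e^{-c\sqrt{\log M}}$ saving easily absorbs this and the $q\leq(\log M)^B$ residue classes, as you say. Finally, your discretization bound $\ll|\beta|\psi_{b,W}^\Delta(M)\psi_{b,W}(M)\ll(\log M)^B\psi_{b,W}(M)/M$ is correct and more than sufficient. In short: this is the argument the cited source must contain, and it works.
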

\begin{Lem}
\label{primepolyminor} Suppose that $U\geq e^{a_1W^k}$. For any
$A>0$, there is a $B=B(A,k)>0$ such that,
$$
\sum_{x=1}^N\lambda_{b,W}(x)e(\alpha\psi(x))\ll_B N(\log N)^{-A}
$$
provided that $|\alpha-a/q|\leq q^{-2}$ with $1\leq a\leq q$,
$(a,q)=1$ and $(\log N)^B\leq q\leq\psi(N)(\log N)^{-B}$.
\end{Lem}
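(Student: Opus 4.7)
The plan is to use the Vaughan identity combined with the classical Weyl inequality for polynomial exponential sums, which is the standard route to minor-arc bounds for sums of the shape $\sum\Lambda(n)e(\alpha F(n))$ with $F$ a polynomial.

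At negligible cost one may replace $\lambda_{b,W}(x)$ by $\frac{\phi(W)}{W}\Lambda(Wx+b)$, the prime-power contribution being $O(\sqrt N\log N)$. After the change of variable $n=Wx+b$ the sum becomes
$$
\frac{\phi(W)}{W}\sum_{\substack{n\le WN+b\\ n\equiv b\,(W)}}\Lambda(n)\,e\bigl(\alpha\,\widetilde\psi(n)\bigr)+O(\sqrt N\log^2N),
$$
where $\widetilde\psi(n)=\psi((n-b)/W)$ has degree $k$ and leading coefficient $a_1/W^k$. Now apply Vaughan's identity at level $N^{1/3}$. This decomposes the sum into a Type I piece
$$
S_I=\sum_{d\le N^{2/3}}a_d\sum_{\substack{l\le N/d\\ dl\equiv b\,(W)}}e\bigl(\alpha\,\widetilde\psi(dl)\bigr),\qquad|a_d|\ll\log N,
$$
and a Type II piece
$$
S_{II}=\sum_{N^{1/3}<d\le N^{2/3}}\sum_{l\sim N/d}b_dc_l\,e\bigl(\alpha\,\widetilde\psi(dl)\bigr),\qquad|b_d|\le1,\ |c_l|\ll\tau(l)\log N.
$$

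For $S_I$, the inner sum in $l$ is a polynomial Weyl sum of degree $k$ with leading coefficient $\alpha a_1d^k/W^k$. Since $(b,W)=1$ forces $\gcd(d,W)=1$, the best Dirichlet approximation to this leading coefficient has denominator at least $q/a_1$, hence in the range $[(\log N)^{B/2},\psi(N)(\log N)^{-B/2}]$ once $B$ is large compared to $a_1$ and $W^k$ (this is what the lower bound $N\ge e^{a_1W^k}$ in the hypothesis ensures). Weyl's inequality then yields $\ll L^{1+\varepsilon}(\log N)^{-B/2^{k-1}}$ per inner sum, and summing over $d$ gives $|S_I|\ll N(\log N)^{-A}$ for $B=B(A,k)$ large enough. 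For $S_{II}$ one applies Cauchy--Schwarz in $d$ to reduce to
$$
\sum_{l\ne l'}\Big|\sum_{d\sim D}e\bigl(\alpha[\widetilde\psi(dl)-\widetilde\psi(dl')]\bigr)\Big|,
$$
whose inner polynomial in $d$ has leading coefficient $\alpha a_1(l^k-l'^k)/W^k$; a second application of Weyl together with the $O(L^2)$ count of pairs again produces a $(\log N)^{-A}$ saving.

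The main obstacle is the standard one: the Dirichlet approximation that Weyl's inequality needs is for $\alpha a_1 d^k/W^k$ (resp.\ $\alpha a_1(l^k-l'^k)/W^k$), not for $\alpha$ itself. The congruence $\gcd(d,W)=1$ (coming from $(b,W)=1$) strips the denominator $W^k$, and the lower bound $e^{a_1W^k}$ absorbs the factor $a_1$ into the logarithm, so that for every admissible $d$ (or pair $l,l'$) the resulting denominator sits in the useful logarithmic window $[(\log N)^{B/2},\psi(N)(\log N)^{-B/2}]$ and the Weyl saving propagates through both $S_I$ and $S_{II}$.
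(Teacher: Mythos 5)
The paper supplies no proof of this lemma; it explicitly defers to \cite{LiPan2}, saying only that the argument is ``standard but too long''. Your Vaughan-plus-Weyl framework is indeed that standard route, so the skeleton of your sketch is the right one. However, the step where you feed the Diophantine information on $\alpha$ into Weyl's inequality contains a genuine error, and it is exactly the step that carries the whole content of a minor-arc bound. You assert that, because $(b,W)=1$ forces $\gcd(d,W)=1$, the leading coefficient $\alpha a_1 d^k/W^k$ of the inner Type~I sum has a rational approximation with denominator at least $q/a_1$. This is false: the denominator is degraded by $\gcd(d^k,q)$, about which $\gcd(d,W)=1$ says nothing. Concretely, take $\psi(x)=x^2$, $W=1$, $\alpha=1/q$ with $q=d_0^2$ and $d_0\sim N^{1/2}$; then $q$ lies comfortably in the range $[(\log N)^B,\psi(N)(\log N)^{-B}]$, yet for $d=d_0$ in your Type~I sum one has $\alpha d^2\in\Z$, the inner Weyl sum equals its trivial bound $N/d_0$, and no saving is obtained. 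The identical problem afflicts your Type~II step with $\gcd(l^k-l'^k,q)$ in place of $\gcd(d^k,q)$. The hypothesis $a_1W^k\le\log N$ controls $a_1$ and $W^k$ but is entirely irrelevant to $d^k$ and $l^k-l'^k$, which range up to $N^{2k/3}$.

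The correct way to finish is not to apply Weyl's inequality ``per $d$'' (or per pair $(l,l')$), since the individual denominators are uncontrollable. One instead Weyl-differences the inner sum $k-1$ times, producing an upper bound of the shape
$$\Big|\sum_{l\le L}e\bigl(\alpha\widetilde\psi(dl)\bigr)\Big|^{2^{k-1}}\ \ll\ L^{2^{k-1}-k}\sum_{|h_1|,\dots,|h_{k-1}|\le L}\min\Bigl(L,\ \bigl\|\alpha\cdot a_1k!\,d^k h_1\cdots h_{k-1}/W^k\bigr\|^{-1}\Bigr),$$
and then sums over $d$ (resp.\ over $l,l'$) as well, so that the final object is $\sum_{m\le M}\min(L,\|\alpha m\|^{-1})$ with $m$ running through the products $a_1k!\,d^k h_1\cdots h_{k-1}$. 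Only at this aggregate stage does one invoke $|\alpha-a/q|\le q^{-2}$, via the standard counting estimate $\sum_{m\le M}\min(L,\|\alpha m\|^{-1})\ll ML\bigl(q^{-1}+L^{-1}+q/(ML)\bigr)\log(2MLq)$; the stated range $(\log N)^B\le q\le\psi(N)(\log N)^{-B}$ then yields the $(\log N)^{-A}$ saving, and the exceptional $d$ with $\gcd(d^k,q)$ large are automatically seen to be sparse. (Alternatively one can isolate and trivially bound those exceptional $d$.) As written, your proposal replaces this counting step with the incorrect claim that the denominator survives as $q/a_1$, so the sketch does not establish the lemma.
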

Lemma \ref{primepolymajor} is the immediate consequence of Lemmas
2.3 and 2.4 of \cite{LiPan2}. The proof of Lemma
\ref{primepolyminor} is standard but too long, so we omit the
details here. And the readers may refer to \cite{LiPan2} for the
proof.

Clearly $\psi_{b,W}$ is positive and strictly increasing on $[1,M]$ provided that $W$ is sufficiently large. Define
$$
\a(x)=\begin{cases} \psi_{b,W}^\Delta(z-1)\lambda_{W_0b+b_0,WW_0}(z)/\psi_{b,W}(M)&\text{if
}x=\psi_{b,W}(z)\text{ for a }1\leq z\leq M,\\
0&\text{otherwise}.
\end{cases}
$$
For any $f:\,\Z_N\to\C$, define
$$
\tilde{f}(r)=\sum_{x=1}^Nf(x)e(-xr/N).
$$
\begin{Lem}
\label{tildebound}
For any $0\not=r\in\Z_N$,
\begin{equation}
|\tilde{\a}(r)|\leq C_1Kw^{-\frac{1}{k(k+3)}},
\end{equation}
where $C_1$ is a constant (only depending on $k$).
\end{Lem}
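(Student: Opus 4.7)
The normalized Fourier coefficient is
$$\tilde{\a}(r) = \frac{1}{\psi_{b,W}(M)} \sum_{z=1}^{M} \psi_{b,W}^{\Delta}(z-1)\,\lambda_{W_0 b + b_0,\, WW_0}(z)\, e\!\left(-r\,\psi_{b,W}(z)/N\right)$$
for $r \neq 0$. The natural approach is a Hardy--Littlewood major/minor arc analysis on the frequency $\alpha := -r/N \in \T$. Apply Dirichlet's approximation theorem to choose $(a, q)$ with $(a, q) = 1$, $1 \le q \le \psi_{b,W}(M)(\log M)^{-B}$, and $|q\alpha - a| \le (\log M)^{B}/\psi_{b,W}(M)$.

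If $q > (\log M)^{B}$ (the minor-arc case), apply partial summation to strip off the weight $\psi_{b,W}^{\Delta}(z-1)$ and use the change of variables $\psi_{b,W}(z) = W^{-1}(\psi(Wz+b) - \psi(b))$ to reduce to the setting of Lemma~\ref{primepolyminor}, which yields $|\tilde{\a}(r)| \ll (\log M)^{-A}$ for any $A > 0$. Since $w = \lfloor\log\log\log\log n\rfloor$ is very small compared to $\log M$, this easily dominates the target $w^{-1/(k(k+3))}$.

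If $q \le (\log M)^{B}$ (the major-arc case), apply Lemma~\ref{primepolymajor} to rewrite the sum as
$$\frac{\phi(WW_{0})}{\phi(WW_{0} q)}\, S(a, q)\, T(\alpha - a/q) + O\!\left(\psi_{b,W}(M)(\log M)^{-B}\right),$$
where $S(a, q)$ is the complete exponential sum over $1 \le r' \le q$ subject to $(WW_{0} r' + W_{0} b + b_{0}, q) = 1$, and $T$ is the associated geometric sum of length $\psi_{b,W}(M)$. Factor $q = q_{1} q_{2}$ with $q_{1} \mid (WW_{0})^{\infty}$ and $(q_{2}, WW_{0}) = 1$. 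Then $\phi(WW_{0})/\phi(WW_{0} q) = 1/(q_{1}\phi(q_{2}))$, and by CRT $S(a, q) = S_{1}(a_{1}, q_{1})\, S_{2}(a_{2}, q_{2})$, with the coprimality constraint vacuous in $S_{1}$ since $(W_{0} b + b_{0}, WW_{0}) = 1$. Weyl-type bounds on complete polynomial exponential sums modulo prime powers then give $|S_{1}|/q_{1} \ll_{k} K\, q_{1}^{-1/(k(k+3))}$ and $|S_{2}|/\phi(q_{2}) \ll_{k} q_{2}^{-1/(k(k+3))}$, where the constant $K$ absorbs the obstruction from small primes $p \le \Psi$ at which $\psi'_{b,W}$ can vanish modulo $p$ (this is precisely what the calibrated choice of $b_{p}$ is for). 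Combined with the trivial bound $|T|/\psi_{b,W}(M) \le 1$, this yields $|\tilde{\a}(r)| \ll K\, q^{-1/(k(k+3))}$, which already gives the claim for $q > w$. When $q \le w$, every prime dividing $q$ divides $W$, and here I would instead exploit the geometric decay $|T(\alpha - a/q)| \le \|\alpha - a/q\|^{-1}$ together with $r \ne 0$ and $N$ prime (so that $\alpha - a/q$ is bounded below in norm by a reciprocal of $qN$) to close the case.

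The main obstacle is the Weyl-type inequality $|S(a, q)| \ll_{k} K\, q^{1 - 1/(k(k+3))}$ uniformly across all admissible moduli $q$, which requires repeated Weyl differencing applied to $\psi_{b,W}$ together with careful local accounting via $K$ at the small primes $p \le \Psi$ where the derivative of $\psi_{b,W}$ could otherwise vanish modulo $p$ and destroy cancellation. A secondary technical point is the extremely-small-$q$ regime $q \le w$, where the singular-series factor $1/q_{1}$ provides no saving and one must pass through the oscillatory behaviour of $T(\alpha - a/q)$, invoking the primality of $N$ to lower-bound $\|\alpha - a/q\|$.
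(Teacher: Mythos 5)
Your overall major/minor arc decomposition, the choice of $\M$ and $\m$, the appeal to Lemma~\ref{primepolyminor} on the minor arcs, the use of Lemma~\ref{primepolymajor} on the major arcs, and the factorisation $q=q_1q_2$ with $q_2=(q,W^\infty)'$ all match the paper. The bound for the case $q\nmid W$ (so $q_2>w$ or $q\ge 2^w$) is also essentially the paper's argument, via the Weyl-type estimate on the complete exponential sum (Lemma~2.7 of \cite{LiPan2}).

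The gap is in the small-modulus regime $q\mid W$. Your plan to ``exploit the geometric decay $|T(\alpha-a/q)|\le\|\alpha-a/q\|^{-1}$, with $\|\alpha-a/q\|\ge 1/(qN)$ since $N$ is prime and $r\ne 0$'' does not close: it yields $|T|\le qN$, while $\psi_{b,W}(M)$ is within $O(\psi_{b,W}^\Delta(M))$ of $KN$, so $|T|/\psi_{b,W}(M)\ll q/K$. Together with the singular-series factor $\phi(WW_0)/\phi(WW_0q)=1/q$ and the trivial $|S(a,q)|\le q$ this gives a bound that is $O(1)$ in $w$, not $O(w^{-1/(k(k+3))})$. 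Indeed the worst case $\|\alpha-a/q\|\asymp 1/(qN)$ can genuinely occur, so this cannot be rescued just by sharpening the lower bound on $\|\theta\|$.

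What the paper does instead is structural, not oscillatory. If $q\mid W$, then since $W$ divides every coefficient of $x^i$ in $\psi_{b,W}$ for $i\ge 2$, the complete sum degenerates to the \emph{linear} sum $\sum_{s=1}^q e(\psi'(b)sa/q)$, which equals $q$ if $q\mid(\psi'(b),W)=K$ and $0$ otherwise --- the Weyl bound $\ll Kq^{1-1/(k(k+3))}$ (while true) is much weaker than this and insufficient here. When $q\mid K$, one observes $KN-\psi_{b,W}(M)\le\psi_{b,W}^\Delta(M)$ and that $\sum_{x=1}^{KN}e(x(r/N-a/q))=0$, because $KN(r/N-a/q)$ is a nonzero integer (using $q\mid K$, $(N,q)=1$, $0<r<N$) while $r/N-a/q\notin\Z$; hence $T(\theta)=O(\psi_{b,W}^\Delta(M))$, and the whole major-arc contribution is $O((\log M)^{-B})$. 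This exact vanishing of the length-$KN$ geometric sum is the key idea missing from your sketch; replacing it by the generic $\|\theta\|^{-1}$ bound loses the factor you need.
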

\begin{proof}
If $r/N\in\m$, then by Lemma \ref{primepolyminor} and partial summation,
\begin{align*}
\tilde{\a}(r)=\frac{1}{\psi_{b,W}(M)}\sum_{z=1}^M\psi_{b,W}^\Delta(z-1)\lambda_{W_0b+b_0,WW_0}(z)
e(-\psi_{b,W}(z)r/N)\ll(\log M)^{-1}.
\end{align*}
Suppose that $r/N\in\M_{a,q}$. Then by Lemma \ref{primepolymajor}
\begin{align*}
&\frac{1}{\psi_{b,W}(M)}\sum_{z=1}^M\psi_{b,W}^\Delta(z-1)\lambda_{W_0b+b_0,WW_0}(z)e(-\psi_{b,W}(z)r/N)\\
=&\frac{\phi(WW_0)}{\phi(WW_0q)\psi_{b,W}(M)}\sum_{\substack{1\leq
s\leq q\\(WW_0s+W_0b+b_0,q)=1}}e(-\psi_{b,W}(s)a/q)\sum_{x=1
}^{\psi_{b,W}(M)}e(x(r/N-a/q))\\
&+O((\log M)^{-B})
\end{align*}
Notice that the leading coefficient of $\psi_{b,W}(x)$ is
$a_1W^{k-1}$, and the coefficient of $x^1$ in $\psi_{b,W}(x)$
coincides with
\begin{align*}
\psi_{b,W}'(0)=\frac{d}{dx}\bigg(\frac{\psi(Wx+b)-\psi(b)}{W}\bigg)\bigg|_{x\to
0}
=\frac{d\psi(x)}{dx}\bigg|_{x\to b}
=\psi'(b).
\end{align*}
Also, clearly for each prime
$p\leq w$, $\psi'(b)\equiv\psi'((b_p-b_0)/W_0)\pmod{p^w}$ since
$W_0b+b_0\equiv b_p\pmod{p^{w+\nu_{p}(W_0)}}$. Therefore when $w$ is sufficiently large, we have
$$
(\psi'(b), a_1W^{k-1})=(\psi'(b), W)=\prod_{p\leq
\Psi}p^{\nu_p(\psi'((b_p-b_0)/W))}=K.
$$
Thus by Lemma 2.7 of \cite{LiPan2},
$$
\sum_{\substack{1\leq s\leq q\\(WW_0s+W_0b+b_0,q)=1}}e(\psi_{b,W}(s)a/q)\ll
Kq^{1-\frac{1}{k(k+2)}}.
$$
Let $q_2$ be the largest divisor of $q$ prime to $W$ and
$q_1=q/q_2$. If $q\nmid W$, then either $q_2>w$ or $q\geq 2^w$.
Hence
\begin{align*}
&\frac{\phi(WW_0)}{\phi(WW_0q)\psi_{b,W}(M)}\sum_{\substack{1\leq s\leq
q\\(WW_0s+W_0b+b_0,q)=1}}e(\psi_{b,W}(s)a/q)\sum_{x=1
}^{\psi_{b,W}(M)}e(x(r/N-a/q))\\
\ll&\frac{Kq^{1-\frac{1}{k(k+2)}}}{q_1\phi(q_2)\psi_{b,W}(M)}
\bigg|\sum_{x=1
}^{\psi_{b,W}(M)}e(x(r/N-a/q))\bigg|\\
\ll& Kw^{-\frac{1}{k(k+3)}}.
\end{align*}

Below assume that $q\mid W$. Since $W$ divides the coefficients of
$x^i$ in $\psi_{b,W}(x)$ for $2\leq i\leq k$, we have
$$
\sum_{\substack{1\leq s\leq
q\\(WW_0s+W_0b+b_0,q)=1}}e(\psi_{b,W}(s)a/q)=\sum_{\substack{1\leq s\leq
q}}e(\psi'(b)sa/q)=\begin{cases} q&\text{if }q\mid (\psi'(b),W)=K,\\
0&\text{otherwise}.
\end{cases}
$$
Now suppose that $q\mid K$. Since
$KN-\psi_{b,W}(M)\leq\psi_{b,W}^\Delta(M)$, then
\begin{align*}
\sum_{x=1 }^{\psi_{b,W}(M)}e(x(r/N-a/q))=&\sum_{x=1
}^{KN}e(x(r/N-a/q))+O(\psi_{b,W}^\Delta(M))\\=&O(\psi_{b,W}^\Delta(M)).
\end{align*}
This concludes that if $q\mid W$ then
\begin{align*}
&\frac{\phi(WW_0)}{\phi(WW_0q)\psi_{b,W}(M)}\sum_{\substack{1\leq s\leq
q\\(WW_0s+W_0b+b_0,q)=1}}e(\psi_{b,W}(s)a/q)\sum_{x=1
}^{\psi_{b,W}(M)}e(x(r/N-a/q))\\
=&O((\log M)^{-B}).
\end{align*}
\end{proof}

By the pigeonhole principle, there exists $1\leq i\leq m$ such
that
$$
|\{x\in X_i\cap[\psi(W),n]:\, x\equiv \psi(b)/2\pmod{KW}\}|\geq
\frac{n}{mKW}-\psi(W)\geq\frac{N}{4mK}.
$$
Without loss of generality, we may assume that $X_1$ is such a
set. Let
$$
A=\{(x-\psi(b)/2)/W:\,x\in X_1\cap[\psi(W),n]:\, x\equiv
\psi(b)/2\pmod{KW}\}.
$$
Suppose that there exist $x',y'\in A$ and
$z'\in\Lambda_{W_0b+b_0,WW_0}$ such that $x'+y'=\psi_{b,W}(z')$.
Then letting $x=Wx'+\psi(b)/2$, $y=Wy'+\psi(b)/2\in X_1$ and
$z=Wz'+b\in\Lambda_{b_0,W_0}$, we have $x+y=\psi(z)$.

Below we consider $A$ as a subset of $\Z_N$. We claim that if
$x,y\in A$ and $z\in\Lambda_{W_0b+b_0,WW_0}\cap[1,M]$ satisfy
$x+y=\psi_{b,W}(z)$ in $\Z_N$, then the equality also holds in
$\Z$. Suppose that $x+y=\psi_{b,W}(z)-lN$ for an integer $l$. Then
$0\leq l<K$ since $n/W<N/2$ and $\psi_{b,W}(z)<KN$. Notice that
$K$ divides $x+y$ and all coefficients of $\psi_{b,W}$. We must
have $K\mid l$, whence $l=0$. Furthermore, we may consider $\a$ as a function over $\Z_N$, i.e.,
$$
\a(x)=\begin{cases} \frac{\psi_{b,W}^\Delta(z-1)}{\psi_{b,W}(M)}\lambda_{W_0b+b_0,WW_0}(z)&\text{if
}x=\psi_{b,W}(z)\text{ in }\Z_N\text{ for a }1\leq z\leq M,\\
0&\text{otherwise}.
\end{cases}
$$
This function is well-defined. In fact, assume that $1\leq z_1,z_2\leq M$ and $\psi_{b,W}(z_1)=\psi_{b,W}(z_2)$ in $\Z_N$. Then $\psi_{b,W}(z_1)=\psi_{b,W}(z_2)+lN$ in $\Z$
where $|l|<K$. But $\psi_{b,W}(z_1)\equiv\psi_{b,W}(z_2)\pmod{K}$, so $l=0$ and $z_1=z_2$.

Let $\eta$ and $\epsilon$ be two positive real numbers to be
chosen later. Let
$$
\RR=\{r\in\Z_N:\,|\tilde{\a}(r)|\geq\eta\}
$$
and
$$
\B=\{x\in\Z_N:\,\|xr/N\|\leq\epsilon\text{ for all }r\in\RR\},
$$
where $\|x\|=\min\{|x-z|:\,z\in\Z\}$. Define $\b=\1_\B/|\B|$ and
$\a'=\a*\b*\b$, where $\1_\B(x)=1$ or $0$  according to whether
$x\in\B$ or not and
$$
f*g(x)=\sum_{y\in\Z_N}f(y)g(x-y).
$$
\begin{Lem}
\label{aadashbound}
If $\epsilon^{|\RR|}\geq \kappa^{-1}C_1Kw^{-\frac{1}{k(k+3)}}$, then for any $x\in\Z_N$
$$
|\a'(x)|\leq\frac{1+2\kappa}{N}.
$$
\end{Lem}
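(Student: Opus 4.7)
The plan is to apply Fourier inversion on $\Z_N$ to $\a'=\a*\b*\b$ and separate the zero frequency from the rest. Using $\widetilde{\a*\b*\b}(r)=\tilde\a(r)\tilde\b(r)^2$ and $\tilde\b(0)=\sum_x\b(x)=1$, one has
\[
\a'(x)=\frac{\tilde\a(0)}{N}+\frac{1}{N}\sum_{r\neq 0}\tilde\a(r)\tilde\b(r)^2\, e(xr/N),
\]
so it suffices to show $\tilde\a(0)\leq 1+\kappa$ and that the oscillatory sum has modulus at most $\kappa$. Note that $\a,\b\geq 0$, so $\a'\geq 0$ and $|\a'|=\a'$.

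For the main term I would unfold
\[
\tilde\a(0)=\frac{1}{\psi_{b,W}(M)}\sum_{z=1}^M\psi_{b,W}^\Delta(z-1)\,\lambda_{W_0b+b_0,WW_0}(z)
\]
and apply the prime number theorem in the arithmetic progression $W_0b+b_0\pmod{WW_0}$ (Siegel--Walfisz is ample here, since $WW_0$ is of size $(\log\log\log\log n)^{O(1)}$) combined with Abel summation against the strictly increasing weight $\psi_{b,W}^\Delta(z-1)$. This yields $\tilde\a(0)=1+o(1)$ as $w\to\infty$, hence $\tilde\a(0)\leq 1+\kappa$ once $w$ is large.

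For the oscillatory sum I would use the crude $\ell^\infty$--$\ell^1$ bound together with Parseval:
\[
\Bigl|\sum_{r\neq 0}\tilde\a(r)\tilde\b(r)^2 e(xr/N)\Bigr|\leq\Bigl(\max_{r\neq 0}|\tilde\a(r)|\Bigr)\sum_{r\in\Z_N}|\tilde\b(r)|^2=\Bigl(\max_{r\neq 0}|\tilde\a(r)|\Bigr)\frac{N}{|\B|},
\]
since $\sum_r|\tilde\b(r)|^2=N\|\b\|_2^2=N/|\B|$. Lemma \ref{tildebound} supplies $\max_{r\neq 0}|\tilde\a(r)|\leq C_1Kw^{-1/(k(k+3))}$, and the standard Bohr-set volume bound $|\B|\geq\epsilon^{|\RR|}N$---obtained by pigeonholing the $N$ vectors $(xr/N)_{r\in\RR}\in[0,1)^{|\RR|}$ into cubes of side $\epsilon$ and taking differences of vectors in a common cube---gives $N/|\B|\leq\epsilon^{-|\RR|}$. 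Hence the oscillatory sum is at most $C_1Kw^{-1/(k(k+3))}\epsilon^{-|\RR|}\leq\kappa$, exactly by the assumed hypothesis; combined with the main-term bound this yields $\a'(x)\leq(1+2\kappa)/N$. There is no genuine obstacle: the decisive feature is that Lemma \ref{tildebound} provides a bound on $\tilde\a(r)$ that holds \emph{uniformly} for every $r\neq 0$, which lets one sidestep the usual Green-style splitting at the threshold $\eta$ defining $\RR$---the threshold enters only implicitly, through the dimension $|\RR|$ of the Bohr set.
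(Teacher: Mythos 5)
Your proof is correct and is essentially the same argument the paper gives: separate the $r=0$ mode, bound $\tilde\a(0)\le 1+\kappa$ via the major-arc estimate (Lemma~\ref{primepolymajor} at $\alpha=0$, which amounts to the PNT in arithmetic progressions plus partial summation, as you say), and control the remaining sum by $\sup_{r\ne 0}|\tilde\a(r)|\cdot\sum_r|\tilde\b(r)|^2=\sup_{r\ne 0}|\tilde\a(r)|\cdot N/|\B|$ together with the Bohr-set bound $|\B|\ge\epsilon^{|\RR|}N$. The only cosmetic difference is that the paper invokes its Lemma~\ref{primepolymajor} rather than re-deriving $\tilde\a(0)=1+o(1)$ from scratch.
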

\begin{proof}
It is easy to see that
$\widetilde{(f*g)}=\tilde{f}\cdot\tilde{g}$. By Lemma
\ref{primepolymajor} for $\alpha=0$ and Lemma \ref{tildebound},
\begin{align*}
|\a'(x)|=&\bigg|\frac{1}{N}\sum_{r}\tilde{\a}(r)\tilde{\b}(r)^2e(\frac{xr}{N})\bigg|\\
\leq&\frac{1}{N}|\tilde{\b}(0)|^2\sum_{z=1}^M\frac{\psi_{b,W}^\Delta(z-1)}{\psi_{b,W}(M)}\lambda_{W_0b+b_0,WW_0}(z)
+\frac{1}{N}\sup_{r\not=0}|\tilde{\a}(r)|\sum_{r\not=0}|\tilde{\b}(r)|^2\\
\leq&\frac{1+\kappa}{N}
+\frac{C_1Kw^{-\frac{1}{k(k+3)}}}{|\B|}.
\end{align*}
By the pigeonhole principle (cf. \cite[Lemma 1.4]{Tao}), we have
$|\B|\geq\epsilon^{|\RR|}N$. All are done.
\end{proof}
\begin{Lem}
\label{primepolyrestriction}
\begin{align*}
\sum_{r\in\Z_N}|\tilde{\a}(r)|^\rho\leq
C(\rho)K.
\end{align*}
provided that $\rho\geq k2^{k+3}$, where $C(\rho)$ is a constant
only depending on $\rho$.
\end{Lem}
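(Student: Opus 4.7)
The plan is to reduce the $L^\rho$ bound to an $L^{2s}$ bound for the smallest even integer $2s\geq k2^{k+3}$, then establish the latter via a Waring--Goldbach-type mean value estimate on the weighted exponential sum
$$
F(\alpha)=\sum_{z=1}^M \psi_{b,W}^\Delta(z-1)\lambda_{W_0b+b_0,WW_0}(z) e(\psi_{b,W}(z)\alpha),
$$
which satisfies $\tilde{\a}(r)=F(-r/N)/\psi_{b,W}(M)$. Since $\|\tilde{\a}\|_\infty\leq\tilde{\a}(0)=1+o(1)$ (by Lemma \ref{primepolymajor} at $\alpha=0$), for $\rho\geq 2s$ the trivial interpolation $\sum_r|\tilde{\a}(r)|^\rho\leq\|\tilde{\a}\|_\infty^{\rho-2s}\sum_r|\tilde{\a}(r)|^{2s}$ reduces everything to the case $\rho=2s$.

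The trigonometric polynomial $|F|^{2s}$ has spectral support in $[-s\psi_{b,W}(M),s\psi_{b,W}(M)]$ and $\psi_{b,W}(M)<KN$, so evaluating $|F|^{2s}$ at the $N$-th roots of unity picks up at most $2sK+1$ aliased Fourier coefficients, each dominated in absolute value by $\int_0^1|F(\alpha)|^{2s}\,d\alpha$. Hence
$$
\sum_{r\in\Z_N}|\tilde{\a}(r)|^{2s}\leq (2sK+1)\frac{N}{\psi_{b,W}(M)^{2s}}\int_0^1|F(\alpha)|^{2s}\,d\alpha.
$$
By Parseval, $\int_0^1|F|^{2s}\,d\alpha=\sum_y r(y)^2$ where
$$
r(y)=\sum_{\substack{\vec z\in[1,M]^s\\ \sum_i\psi_{b,W}(z_i)=y}}\prod_{i=1}^s\psi_{b,W}^\Delta(z_i-1)\lambda_{W_0b+b_0,WW_0}(z_i)
$$
is the weighted representation count at $y$; Lemma \ref{primepolymajor} at $\alpha=0$ yields $\sum_y r(y)=(1+o(1))\psi_{b,W}(M)^s$. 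Combined with the crucial Waring--Goldbach-type uniform bound $r(y)\ll_s\psi_{b,W}(M)^{s-1}$, this gives $\sum_y r(y)^2\ll_s\psi_{b,W}(M)^{2s-1}$. Since $\psi_{b,W}(M)\asymp KN$, we conclude
$$
\sum_r|\tilde{\a}(r)|^{2s}\ll_s sK\cdot\frac{N}{\psi_{b,W}(M)}\ll_s 1,
$$
which is certainly $\leq C(\rho)K$.

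The main obstacle is the uniform representation bound $r(y)\ll_s\psi_{b,W}(M)^{s-1}$, a Waring--Goldbach-type theorem for the polynomial $\psi_{b,W}$ restricted to primes in the arithmetic progression $W_0b+b_0\pmod{WW_0}$. For pure $k$-th powers this is classical; in the general case one proceeds via the Hardy--Littlewood circle method, with major arcs (via Lemma \ref{primepolymajor}) contributing a uniformly bounded singular series---thanks to the $W$-trick and the careful choice of $b_p$ ensuring $(\psi'((b_p-b_0)/W_0),p)$ is exactly the $p$-part of $K$---times the expected magnitude, and minor arcs (via Lemma \ref{primepolyminor}) handled by Weyl-type differencing on the degree-$k$ polynomial $\psi_{b,W}$, for which the threshold $2s\geq k2^{k+3}$ provides ample margin over the Weyl exponent.
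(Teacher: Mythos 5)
The paper disposes of this lemma in one line, rewriting the left-hand side as a moment of the weighted exponential sum and citing Lemma 2.10 of \cite{LiPan2}, a restriction estimate in the Bourgain--Green mould (compare the proof of Lemma \ref{primerestriction}, which invokes Green's Lemma 6.6). Your route is genuinely different: a Hua/Waring--Goldbach mean-value argument through the representation function $r(y)$. The aliasing step is correct and cleanly handles the passage from $\Z_N$ to $\T$: $|F|^{2s}$ is a nonnegative trigonometric polynomial of degree at most $s\psi_{b,W}(M)$ with $\psi_{b,W}(M)<KN$, so sampling at $N$-th roots of unity picks up at most $2sK+1$ aliased Fourier coefficients, each bounded by $\int_0^1|F|^{2s}$. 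The reduction from $\rho$ to the even exponent $2s=k2^{k+3}$ via $\|\tilde{\a}\|_\infty\leq\tilde{\a}(0)$ is also fine.

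There are two issues, one small and one large. The small one is a factor-of-$K$ slip: since $K\mid\psi'(b)$ and $K\mid W$, every coefficient of $\psi_{b,W}$ is divisible by $K$, hence $\psi_{b,W}(z)\equiv 0\pmod{K}$ for all $z$, so $r(y)$ is supported on $K\Z$. The correct uniform expectation is therefore $r(y)\ll_s K\psi_{b,W}(M)^{s-1}$, not $\psi_{b,W}(M)^{s-1}$; as written your claimed bound cannot hold. This is not fatal---with the corrected bound you land on $\ll_s K$ rather than $\ll_s 1$, which is exactly the stated conclusion---but it should be fixed. The large issue is that the uniform representation bound is the whole difficulty and you leave it as an assertion. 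Proving $r(y)\ll_s K\psi_{b,W}(M)^{s-1}$ for a weighted polynomial sum over primes in a progression is a Waring--Goldbach theorem in its own right: the minor-arc treatment needs an auxiliary mean value $\int_0^1|F|^{2t}\,d\alpha$ for some intermediate $t$ (which does not follow from Lemma \ref{primepolyminor} alone and risks circularity with the estimate being proved), and the major arcs require controlling a singular series uniformly in $y$. By contrast, the Green--Bourgain restriction argument presumably behind \cite{LiPan2}'s Lemma 2.10 needs only the major/minor dichotomy already supplied by Lemmas \ref{primepolymajor} and \ref{primepolyminor}, together with Parseval and a dyadic decomposition of frequencies by the size of $|\tilde{\a}(r)|$; it requires no pointwise control on $r(y)$ at all, which is why the paper can afford a one-line citation. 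Your plan is viable in principle and the aliasing framing is nice, but it is a considerably heavier machine than the problem requires, and the central step is asserted rather than established.
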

\begin{proof}
Note that
\begin{align*}
\sum_{r\in\Z_N}|\tilde{\a}(r)|^\rho=\frac{1}{\psi_{b,W}(M)^\rho}\sum_{r\in\Z_N}\bigg|\sum_{z=1}^M\psi_{b,W}^\Delta(z-1)\lambda_{W_0b+b_0,WW_0}(z)e(-\psi_{b,W}(z)r/N)\bigg|^\rho.
\end{align*}
Thus Lemma \ref{primepolyrestriction} easily follows from Lemma 2.10 of \cite{LiPan2}.
\end{proof}
\begin{Lem}
\label{minor}
\begin{align*}
&\bigg|\sum_{\substack{1\leq x,y,z\leq N\\ x+y=z}}\1_A(x)\1_A(y)\a(z)-
\sum_{\substack{1\leq x,y,z\leq N\\ x+y=z}}\1_A(x)\1_A(y)\a'(z)\bigg|\\
\leq&C_2K(\epsilon^2\eta^{-k2^{k+3}}+\eta^{\frac{1}{k2^{k+3}+1}})N,
\end{align*}
where $C_2$ is a positive constant (only depending on $k$).
\end{Lem}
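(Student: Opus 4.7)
My approach is to pass to the Fourier side and split into major and minor frequencies. By Parseval and the convolution identity $\widetilde{f*g}=\tilde{f}\tilde{g}$, applied both to $\1_A*\1_A$ and to $\a'=\a*\b*\b$, the quantity to bound equals
$$\frac{1}{N}\sum_{r\in\Z_N}\tilde{\1}_A(r)^2\,\overline{\tilde{\a}(r)\bigl(1-\tilde{\b}(r)^2\bigr)},$$
so in absolute value it is at most $\frac{1}{N}\sum_{r}|\tilde{\1}_A(r)|^2|\tilde{\a}(r)|\,|1-\tilde{\b}(r)^2|$. I will split this sum according to whether $r\in\RR$ or $r\notin\RR$.

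For $r\in\RR$ the key observation is that the Bohr set $\B$ is symmetric about $0$, since $\|(-x)r/N\|=\|xr/N\|$ for every $r$. Hence $\tilde{\b}(r)=\frac{1}{|\B|}\sum_{x\in\B}\cos(2\pi xr/N)$ is real, and for each $x\in\B$ and $r\in\RR$ the inequality $\|xr/N\|\le\epsilon$ yields $\cos(2\pi xr/N)\ge 1-2\pi^2\epsilon^2$. Averaging gives the crucial quadratic gain $0\le 1-\tilde{\b}(r)^2=O(\epsilon^2)$. Combined with the trivial bounds $|\tilde{\a}(r)|\le\tilde{\a}(0)=1+o(1)$ (Lemma \ref{primepolymajor} at $\alpha=0$), $|\tilde{\1}_A(r)|\le|A|\le N$, and the Chebyshev-type bound
$$|\RR|\le\eta^{-\rho}\sum_r|\tilde{\a}(r)|^\rho\le C(\rho)K\eta^{-\rho}$$
with $\rho=k2^{k+3}$ from Lemma \ref{primepolyrestriction}, the contribution from $r\in\RR$ is $O(K\epsilon^2\eta^{-\rho}N)$.

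For the complementary range $r\notin\RR$, where $|\tilde{\a}(r)|<\eta$, I interpolate with parameter $\theta=1/(\rho+1)$, writing $|\tilde{\a}(r)|\le\eta^\theta|\tilde{\a}(r)|^{1-\theta}$ and using $|1-\tilde{\b}(r)^2|\le 2$. H\"older's inequality with conjugate exponents $\rho/(1-\theta)$ on $|\tilde{\a}|^{1-\theta}$ and $\rho/(\rho-1+\theta)$ on $|\tilde{\1}_A|^{2}$ gives
$$\sum_r|\tilde{\1}_A(r)|^2|\tilde{\a}(r)|^{1-\theta}\le\Big(\sum_r|\tilde{\a}(r)|^\rho\Big)^{(1-\theta)/\rho}\Big(\sum_r|\tilde{\1}_A(r)|^{2\rho/(\rho-1+\theta)}\Big)^{(\rho-1+\theta)/\rho}.$$
The first factor is $(C(\rho)K)^{(1-\theta)/\rho}$ by Lemma \ref{primepolyrestriction}. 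For the second, I use $|\tilde{\1}_A(r)|^{2\rho/(\rho-1+\theta)-2}\le|A|^{2(1-\theta)/(\rho-1+\theta)}\le N^{2(1-\theta)/(\rho-1+\theta)}$ together with Parseval $\sum_r|\tilde{\1}_A(r)|^2=N|A|\le N^2$; a short computation shows the exponents combine to give exactly $N^2$. The choice $\theta=1/(\rho+1)$ is precisely what makes $\theta=(1-\theta)/\rho=1/(\rho+1)$, so after dividing by $N$ and using $K^{1/(\rho+1)}\le K$ the minor-arc contribution is $O(K\eta^{1/(\rho+1)}N)$. Adding the two pieces produces the stated bound. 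The main subtlety is insisting on the symmetry of $\B$ to obtain the $O(\epsilon^2)$ rather than the naive $O(\epsilon)$: without the quadratic improvement the first error would be far too weak to be absorbed in Lemma \ref{aadashbound}, while the balancing $\theta=1/(\rho+1)$ is dictated by matching the exponents of $\eta$ and $K$ coming from the two sides of H\"older.
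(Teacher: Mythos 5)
Your proof is correct and follows essentially the same strategy as the paper: the identical Fourier decomposition into $r\in\RR$ and $r\notin\RR$, the same $O(\epsilon^2)$ bound on $|1-\tilde{\b}(r)^2|$ for major frequencies (which you derive directly from the symmetry of the Bohr set $\B$ rather than citing Green's Lemma 6.7), and the same H\"older interpolation with exponent $\theta=1/(\rho+1)$ on the minor frequencies via the restriction estimate of Lemma \ref{primepolyrestriction}.
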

\begin{proof}
It is easy to see that
\begin{align*}
\sum_{\substack{1\leq x,y,z\leq N\\
x+y=z}}\1_A(x)\1_A(y)\a(z)=\frac{1}{N}\sum_{r\in\Z_N}\tilde{\1}_A(r)\tilde{\1}_A(-r)\tilde{\a}(r)
\end{align*}
and
\begin{align*}
\sum_{\substack{1\leq x,y,z\leq N\\
x+y=z}}\1_A(x)\1_A(y)\a'(z)=\frac{1}{N}\sum_{r\in\Z_N}\tilde{\1}_A(r)\tilde{\1}_A(-r)\tilde{\a}(r)\tilde{\b}(r)^2.
\end{align*}
Hence
\begin{align*}
&\sum_{\substack{1\leq x,y,z\leq N\\ x+y=z}}\1_A(x)\1_A(y)\a(z)-\sum_{\substack{1\leq x,y,z\leq N\\ x+y=z}}\1_A(x)\1_A(y)\a'(z)\\
=&\frac{1}{N}\sum_{r\in\Z_N}\tilde{\1}_A(r)\tilde{\1}_A(-r)\tilde{\a}(r)(1-\tilde{\b}(r)^2).
\end{align*}
Let $\rho=k2^{k+3}$. If $r\in\RR$, then by the proof of Lemma 6.7 of \cite{Green05}
$$
|1-\tilde{\b}(r)^2|\leq 32\epsilon^2.
$$
So
\begin{align*}
\bigg|\sum_{r\in\RR}\tilde{\1}_A(r)\tilde{\1}_A(-r)\tilde{\a}(r)(1-\tilde{\b}(r)^2)\bigg|\leq
|1-\tilde{\b}(r)^2|\sum_{r\in\RR}|\tilde{\1}_A(r)|^2|\tilde{\a}(r)|\leq64\epsilon^2N^2|\RR|.
\end{align*}
By Lemma \ref{primepolyrestriction} we have,
$$
|\RR|\leq\eta^{-\rho}\sum_{r\in\RR}|\tilde{\a}(r)|^\rho\leq C(\rho)K\eta^{-\rho}.
$$
Applying the H\"older inequality,
\begin{align*}
&\bigg|\sum_{r\not\in\RR}\tilde{\1}_A(r)\tilde{\1}_A(-r)\tilde{\a}(r)(1-\tilde{\b}(r)^2)\bigg|\\
\leq&\bigg|\sum_{r\not\in\RR}\tilde{\1}_A(r)\tilde{\1}_A(-r)\tilde{\a}(r)(1-\tilde{\b}(r)^2)\bigg|\\
\leq&
2N^\frac2{\rho+1}\sup_{r\not\in\RR}|\tilde{\a}(r)|^\frac1{\rho+1}\bigg(\sum_{r\not\in\RR}|\tilde{\1}_A(r)|^2\bigg)^{\frac{\rho}{\rho+1}}
\bigg(\sum_{r\not\in\RR}|\tilde{\a}(r)|^\rho\bigg)^{\frac1{\rho+1}}\\
\leq&2C(\rho)^{\frac1{\rho+1}}K^{\frac1{\rho+1}}\eta^\frac1{\rho+1}N^2,
\end{align*}
where we again use Lemma \ref{primepolyrestriction} in the last step.
\end{proof}

\begin{Lem}
\label{major}
\begin{align*}
\sum_{\substack{x,y,z\in\Z_N\\ x+y=z}}\1_A(x)\1_A(y)\a'(z)\geq\kappa^4 N.
\end{align*}
\end{Lem}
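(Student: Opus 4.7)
The plan is to Fourier-expand $S:=\sum_{x+y=z}\1_A(x)\1_A(y)\a'(z)$, extract a main term of size $\sim|A|^2/N$ from the zero frequency, and absorb the contribution of the nonzero frequencies via Cauchy--Schwarz together with the pointwise bound $\|\a'\|_\infty\leq(1+2\kappa)/N$ from Lemma \ref{aadashbound}.

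Specifically, Fourier inversion gives
$$
S=\frac{1}{N}\sum_{r\in\Z_N}\widetilde{\a'}(r)\,\tilde{\1}_A(-r)^2,
$$
and the $r=0$ term equals $\widetilde{\a'}(0)|A|^2/N=\tilde{\a}(0)|A|^2/N$ since $\widetilde{\a'}(0)=\tilde{\a}(0)\tilde{\b}(0)^2=\tilde{\a}(0)$. Applying Lemma \ref{primepolymajor} with $\alpha=a=0$, $q=1$ yields $\tilde{\a}(0)=1+O((\log M)^{-B})\geq 1-\kappa$ once $n$ is sufficiently large, so the main contribution is at least $(1-\kappa)|A|^2/N$.

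For the $r\neq 0$ part, Cauchy--Schwarz gives
$$
\bigg|\sum_{r\neq 0}\widetilde{\a'}(r)\,\tilde{\1}_A(-r)^2\bigg|\leq\bigg(\sum_{r\neq 0}|\widetilde{\a'}(r)|^2\bigg)^{1/2}\bigg(\sum_{r\neq 0}|\tilde{\1}_A(r)|^4\bigg)^{1/2}.
$$
For the first factor, Parseval together with $\|\a'\|_\infty\leq(1+2\kappa)/N$ (Lemma \ref{aadashbound}) yields
$$
\sum_r|\widetilde{\a'}(r)|^2=N\sum_x\a'(x)^2\leq N\|\a'\|_\infty\,\tilde{\a}(0)\leq(1+2\kappa)(1+\kappa),
$$
and subtracting $|\widetilde{\a'}(0)|^2\geq(1-\kappa)^2$ bounds the nonzero part by $O(\kappa)$. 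For the second factor, $|\tilde{\1}_A(r)|\leq|A|$ combined with Parseval gives $\sum_r|\tilde{\1}_A(r)|^4\leq|A|^2\sum_r|\tilde{\1}_A(r)|^2=N|A|^3$. Hence the error in $S$ is at most $O\bigl(\sqrt{\kappa|A|^3/N}\bigr)$.

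Since $|A|\geq N/(4mK)$ while $\kappa=10^{-4}/(mK)$ is much smaller than $|A|/N$, this error is bounded by $\tfrac{1}{2}|A|^2/N$, so $S\geq\tfrac{1}{4}|A|^2/N\geq N/(64m^2K^2)\geq\kappa^4 N$, as required. The only genuinely nontrivial step is extracting the $O(\kappa)$ bound on $\sum_{r\neq 0}|\widetilde{\a'}(r)|^2$ from Lemma \ref{aadashbound}; without the $L^\infty$ control on $\a'$ the error estimate collapses (individually $|\widetilde{\a'}(r)|$ can be of order $1$ on $\RR$), but with it in hand the rest is a routine Fourier computation that neither uses $\RR$, $\B$, nor Lemma \ref{primepolyrestriction}.
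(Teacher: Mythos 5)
Your proof is correct, and it follows a genuinely different route from the paper. The paper's proof works entirely on the physical side: it sets $\A=\{x\in\Z_N:\a'(x)\geq\kappa/N\}$, uses Lemma \ref{aadashbound} only to deduce $|\A|\geq(1-3\kappa)N$, and then invokes a quantitative three-set representation bound (Lemma 3.3 of \cite{LiPan1}, a lower bound on $\nu_{A,A,-\A}(0)$ in terms of $\min\{|A|,|\A|,(2|A|+|\A|-N)/4\}$) to bound the sum from below by $\frac{\kappa}{N}\nu_{A,A,-\A}(0)\geq\kappa^4 N$. You instead work on the Fourier side, isolating the $r=0$ contribution $\tilde\a(0)|A|^2/N\geq(1-\kappa)|A|^2/N$ and using Cauchy--Schwarz plus Parseval (with $\a'\geq0$ and $\|\a'\|_\infty\leq(1+2\kappa)/N$ from the same Lemma \ref{aadashbound}) to get $\sum_{r\neq0}|\widetilde{\a'}(r)|^2\leq5\kappa+\kappa^2$, which beats the trivial $\sum_r|\tilde\1_A(r)|^4\leq N|A|^3$ provided $\kappa$ is small compared with $|A|/N$ --- and it is, since $|A|/N\geq1/(4mK)$ while $\kappa=10^{-4}/(mK)$. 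Both arguments hinge on the same $L^\infty$ control from Lemma \ref{aadashbound}, but your version replaces the external combinatorial lemma by an elementary Parseval computation, at the modest cost of appealing to the explicit numerical gap between $\kappa$ and $|A|/N$; the paper's version is slightly more robust in the constants (it never needs to compare $\kappa$ against $|A|/N$ quantitatively) but imports the $\nu_{A,A,-\A}$ estimate from \cite{LiPan1}. One small point worth making explicit in your write-up: the inequality $\sum_x\a'(x)^2\leq\|\a'\|_\infty\sum_x\a'(x)$ uses $\a'\geq0$, which holds because $\psi_{b,W}$ is strictly increasing on $[1,M]$ (so $\psi^\Delta_{b,W}\geq0$) and $\lambda,\b\geq0$.
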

\begin{proof}
Let
$$
\A=\{x\in\Z_N:\,a'(x)\geq\kappa/N\}.
$$
Then by Lemma \ref{aadashbound}
$$
\frac{1+2\kappa}{N}|\A|+\frac{\kappa}{N}(N-|\A|)\geq\sum_{x\in\Z_N}\a'(x)=\sum_{x\in\Z_N}\a(x)\geq1-\kappa,
$$
whence $|\A|\geq (1-3\kappa)N$.
Define
$$
\nu_{A,A,-\A}(x)=|\{(x_1,x_2,x_3):\,x_1,x_2\in A, x_3\in\A, x_1+x_2-x_3=x\}|.
$$
By Lemma 3.3 of \cite{LiPan1}, we know
$$
\nu_{A,A,-\A}(x)\geq(\min\{|A|,|\A|,\frac{2|A|+|\A|-N}{4}\})^3N^{-1}.
$$
for any $x\in\Z_N$. It follows that
\begin{align*}
\sum_{\substack{x,y,z\in\Z_N\\ x+y=z}}\1_A(x)\1_A(y)\a'(z)
\geq\sum_{\substack{x,y\in A, z\in\A\\ x+y=z}}\frac{\kappa}{N}=
\frac{\kappa}{N}\nu_{A,A,-\A}(0)
\geq\kappa^4 N.
\end{align*}
\end{proof}
Combining Lemmas \ref{minor} and \ref{major}, we obtain that
\begin{align*}
&\sum_{\substack{1\leq x,y,z\leq N\\ x+y=z}}\1_A(x)\1_A(y)\a(z)\\
\geq&\sum_{\substack{1\leq x,y,z\leq N\\ x+y=z}}\1_A(x)\1_A(y)\a'(z)-
C_2K(\epsilon^2\eta^{-k2^{k+3}}+\eta^{\frac{1}{k2^{k+3}+1}})N\\
\geq&\kappa^4N-C_2K(\epsilon^2\eta^{-k2^{k+3}}+\eta^{\frac{1}{k2^{k+3}+1}})N.
\end{align*}
We may choose sufficiently small $\eta$ and $\epsilon$ such that $$\epsilon^{C(k2^{k+3})K\eta^{-k2^{k+3}}}\geq
\kappa^{-1}C_1Kw^{-\frac{1}{k(k+3)}}$$ and
$C_2K(\epsilon^2\eta^{-k2^{k+3}}+\eta^{\frac{1}{k2^{k+3}+1}})\leq \kappa^4/2$, provided that $w$ is sufficiently large. Thus
\begin{align*}
\sum_{\substack{x,y\in A,\ 1\leq z\leq N\\x\not=y,\
x+y=z}}\a(z)\geq&\sum_{\substack{x,y\in A,\ 1\leq z\leq N\\
x+y=z}}\a(z)-\sum_{\substack{1\leq z\leq
N}}\a(z)\geq\frac{\kappa^4}{3}N.
\end{align*}
All are done.\qed

\section{Proof of Theorem \ref{primecolorprimepoly}}
\setcounter{Lem}{0}\setcounter{Thm}{0}\setcounter{Cor}{0}
\setcounter{equation}{0}

Let $\P$ denote the set of all primes. Assume that
$\P=X_1\cup\cdots\cup X_m$ where $X_i\cap X_j=\emptyset$ if
$i\not=j$. Also, let $\kappa=10^{-4}K^{-1}m^{-1}$.

Let $\Psi=\max\{(2k+1)W_0,|a_1|,\ldots,|a_{k}|\}$. Then for a
prime $p>\Psi$, by Lemma 2.1 we know that there exists $1\leq
b_p\leq p-1$ with $b_p\equiv b_0\pmod{W_0}$ such that
$$
\psi'((b_p-b_0)/W_0)\psi((b_p-b_0)/W_0)\not\equiv0\pmod{p}.
$$
For a prime $p\leq\Psi$, we may choose $b_p\geq 1$ such that
$$
b_p\equiv W_0c_p+b_0\pmod{pW_0}
$$
and $\psi'((b_p-b_0)/W_0)>0$. Let
$$
K=\prod_{\substack{p\text{ prime}\\
p\leq\Psi}}p^{\nu_p(\psi'((b_p-b_0)/W_0))},
$$
where $\nu_p(x)=\max\{v\in\Z:\,p^v\mid x\}$.

Suppose that $n$ is a sufficiently large integer.
Let $w=\floor{\log\log\log\log n}$ and
$$
W=\prod_{\substack{\text{prime }\\
p\leq w}}p^w.
$$
Same as previous section, there exists $1\leq b\leq W-1$ such that
$$
W_0b+b_0\equiv b_p\pmod{p^{w+\nu_p(W_0)}}
$$
for each prime $p\leq w$. And also we know that $\psi(b)$ is even.

By the prime number theorem, we know
$$
\sum_{\substack{1\leq x\leq n,\ x\text{ prime}\\ x\equiv \psi(b)/2\pmod{KW}}}\log x=\frac{(1+o(1))n}{\phi(KW)}.
$$
Hence in view of the pigeonhole principle, without loss of generality, we may assume that
\begin{align*}
\sum_{\substack{x\in X_1\cap[\psi(W),n]\\ x\equiv \psi(b)/2\pmod{KW}}}\log x\geq&\frac{(1-\kappa)n}{m\phi(KW)}.
\end{align*}
Let $N$ be a prime in $(2n/W,(2+\kappa)n/W]$ and
$$
A=\{(x-\psi(b)/2)/W:\, x\in X_1\cap[\psi(W),n], x\equiv \psi(b)/2\pmod{KW}\}.
$$
Below we consider $A$ as a subset of $\Z_N$. Similarly, if
$x'+y'=\psi_{b,W}(z')$ holds in $\Z_N$ for $x',y'\in A$ and
$z'\in\Lambda_{W_0b+b_0,WW_0}$, then we also have $x+y=\psi(z)$
holds in $\Z$ where $x=Wx'+\psi(b)/2$, $y=Wy'+\psi(b)/2\in X_1$
and $z=Wz'+b\in\Lambda_{b_0,W_0}$. Define
$a=\1_A\lambda_{\psi(b)/2,KW}$/N. Clearly we have
\begin{align*}
\sum_{x=1}^Na(x)\geq&\frac{1}{3mK}.
\end{align*}
\begin{Lem}[Bourgain \cite{Bourgain89,Bourgain93} and Green \cite{Green05}]
\label{primerestriction}
$$
\sum_{r=1}^N|\tilde{a}(r)|^\rho\leq C'(\rho)
$$
for any $\rho>2$.
\end{Lem}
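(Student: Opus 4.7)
The plan is to deduce this directly from the restriction theorem for primes of Bourgain \cite{Bourgain89,Bourgain93}, refined in the discrete setting $\Z_N$ by Green \cite{Green05}. In our setup the function $a$ satisfies $a(x)\leq \lambda_{\psi(b)/2,KW}(x)/N$ pointwise, and the modulus $KW$ retains the ``$W$-trick'' form with $K$ a fixed constant depending only on $\psi$, $b_0$, $W_0$; thus Green's theorem applies essentially verbatim, with the factor $K$ only inflating the implicit constants.

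The heart of the argument is a Tomas-Stein style $TT^*$ reduction. By duality, the desired $\ell^\rho$ bound on $\tilde{a}$ is equivalent to the extension estimate
$$
\Bigl|\sum_{r\in\Z_N}g(r)\tilde{a}(r)\Bigr|\ll \|g\|_{\ell^{\rho'}(\Z_N)},
$$
with $1/\rho+1/\rho'=1$. Rewriting the left side as $\sum_{x}a(x)Tg(x)$ where $Tg(x)=\sum_r g(r)e(xr/N)$, applying Cauchy-Schwarz against the probability measure $a/\sum_x a(x)$, and expanding $|Tg|^2$ via the identity $\sum_x a(x)e(x(r-s)/N)=\overline{\tilde{a}(r-s)}$, one arrives at a Young-type convolution inequality whose right-hand side involves $\|\tilde a\|_{\ell^{\rho/2}}$. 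This yields a self-improving bound of the shape $\|\tilde a\|_{\ell^\rho}^2\lesssim\|\tilde a\|_{\ell^{\rho/2}}$, which, combined with pointwise Fourier decay on minor arcs, bootstraps the trivial Parseval estimate into the required $\ell^\rho$ bound for every $\rho>2$.

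The pointwise Fourier decay is supplied by the Hardy-Littlewood circle method. On minor arcs one applies Vaughan's identity to the exponential sum $\sum_x \lambda_{\psi(b)/2,KW}(x)e(xr/N)$ and obtains a saving of $(\log N)^{-A}$; this is precisely the linear-polynomial analogue of Lemma \ref{primepolyminor}. On major arcs one has an explicit main-term evaluation in terms of Ramanujan sums together with the Siegel-Walfisz theorem, and the corresponding level sets of $\tilde a$ are controlled by the packing geometry of rationals with small denominator. These inputs are assembled by a standard dyadic pigeonhole on the level sets of $|\tilde a|$ to give the conclusion.

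The main obstacle, if any, is bookkeeping rather than substance: one must check that Green's argument is not disturbed by replacing $W$ with $KW$. Since $K$ depends only on $\psi$, $b_0$, $W_0$ and is independent of $n$, it is absorbed into the implicit constants throughout the argument, and no genuinely new analytic difficulty arises. Thus the lemma follows from \cite{Bourgain89,Bourgain93,Green05}.
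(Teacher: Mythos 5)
The paper's entire proof of this lemma is the single line ``See \cite[Lemma 6.6]{Green05},'' and you reach the same endpoint: reduce to the Bourgain--Green restriction estimate for primes, observing (correctly, and this is the only genuinely new point that needs checking here) that passing from the modulus $W$ to $KW$ with $K$ a fixed constant depending only on $\psi,b_0,W_0$ is harmless since $K$ is absorbed into implicit constants. In that sense your proposal matches the paper's approach.

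However, your expository sketch of the underlying argument contains a genuine gap. The $TT^*$/Cauchy--Schwarz step does produce the self-improving inequality $\|\tilde a\|_{\ell^\rho}^2\lesssim\|\tilde a\|_{\ell^{\rho/2}}$ (with implied constant $\sum_x a(x)=O(1)$), but this cannot be ``bootstrapped from the trivial Parseval estimate'' as you claim. Parseval gives $\|\tilde a\|_{\ell^2}^2=N\|a\|_{\ell^2}^2$, which is of size roughly $\log N$ rather than $O(1)$; iterating the self-improvement from $\rho=2$ only yields $\|\tilde a\|_{\ell^{2^{k+1}}}\lesssim(\log N)^{2^{-(k+1)}}$, never an absolute constant, and says nothing at all for exponents $\rho\in(2,4)$, which is precisely the hard range. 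The pointwise minor-arc decay by itself cannot rescue this either, since the number of minor-arc frequencies is $\sim N$ and a bound $|\tilde a(r)|\ll(\log N)^{-A}$ there contributes $N(\log N)^{-A\rho}$ to $\sum_r|\tilde a(r)|^\rho$. What Green (following Bourgain) actually does is estimate the cardinality of the level sets $\{r:|\tilde a(r)|\geq\delta\}$ directly, showing via the major-arc expansion and a large-sieve/mean-value argument that such $r$ must lie near rationals $a/q$ with $q\lesssim\delta^{-2}$ and that there are $O(\delta^{-2-\varepsilon})$ of them; summing dyadically in $\delta$ then yields $\sum_r|\tilde a(r)|^\rho=O_\rho(1)$ for every fixed $\rho>2$. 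The Tomas--Stein self-improvement plays a role only in establishing the level-set bound, not as a direct iteration scheme from $\ell^2$. Since the paper treats the lemma as a black box, this imprecision does not propagate, but the proof sketch as written would not close.
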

\begin{proof}
See \cite[Lemma 6.6]{Green05}.
\end{proof}
Let
$$
R=\{r\in\Z_N:\, |\tilde{a}(r)|\geq\eta\}$$
and
$$
B=\{x\in\Z_N:\, \|xr/N\|\leq\epsilon\text{ for all }r\in R\}.
$$
Define $\beta=\1_B/|B|$ and $a'=a*\beta*\beta$.
\begin{Lem}
\label{minorprime}
\begin{align*}
&\bigg|\sum_{\substack{1\leq x,y,z\leq N\\ x+y=z}}a(x)a(y)\a(z)-\sum_{\substack{1\leq x,y,z\leq N\\ x+y=z}}a'(x)a'(y)\a'(z)\bigg|\\
\leq&C_3K^\frac1{\rho+1}(\epsilon^2\eta^{-k2^{k+3}}+\eta^{\frac{1}{k2^{k+3}+1}})N^{-1},
\end{align*}
where $C_3$ is a positive constant (only depending on $k$).
\end{Lem}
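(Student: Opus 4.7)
The plan is to mirror the proof of Lemma \ref{minor}, with the primes-weight $a$ now playing the role previously played by $\1_A$; the new ingredient is that $a$ itself satisfies a restriction estimate (Lemma \ref{primerestriction}). First, Fourier-invert both sums on $\Z_N$. Since $\tilde{a'}(r)=\tilde a(r)\tilde\beta(r)^2$, $\tilde{\a'}(r)=\tilde\a(r)\tilde\beta(r)^2$, and $B=-B$ makes $\tilde\beta$ real, one finds
$$
\sum_{x+y=z}a(x)a(y)\a(z)-\sum_{x+y=z}a'(x)a'(y)\a'(z)
= \frac{1}{N}\sum_{r\in\Z_N}\tilde a(-r)^2\,\tilde\a(r)\bigl(1-\tilde\beta(r)^6\bigr).
$$
Equivalently, one can telescope $aa\a-a'a'\a'=(a-a')a\a+a'(a-a')\a+a'a'(\a-\a')$ and treat each piece separately; each carries a factor $(1-\tilde\beta^2)$ on the Fourier side. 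Then split the sum into $r\in R$ and $r\notin R$.

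On $R$, use $|1-\tilde\beta(r)^6|\le 3|1-\tilde\beta(r)^2|\le 96\epsilon^2$ (the bound on $|1-\tilde\beta^2|$ being Green's estimate invoked in Lemma \ref{minor}), combine with the trivial $|\tilde\a(r)|=O(1)$, and apply the Markov-type consequence
$$
\sum_{r\in R}|\tilde a(r)|^2\le \eta^{2-\rho}\sum_r|\tilde a(r)|^\rho\le C'(\rho)\,\eta^{-\rho}
$$
of Lemma \ref{primerestriction}, with $\rho=k2^{k+3}$. This yields the $\epsilon^2\eta^{-\rho}N^{-1}$ contribution.

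On the complement $r\notin R$, bound $|1-\tilde\beta^6|\le 2$ and apply the same three-factor H\"older that appeared in Lemma \ref{minor}. Split $|\tilde a|^2|\tilde\a|$ so that the three factors yield respectively $(\sum|\tilde a|^2)^{\rho/(\rho+1)}$ (Parseval), $(\sum|\tilde\a|^\rho)^{1/(\rho+1)}\le(C(\rho)K)^{1/(\rho+1)}$ (Lemma \ref{primepolyrestriction}), and $\sup_{r\notin R}|\tilde a|^{2/(\rho+1)}|\tilde\a|^{1/(\rho+1)}\le\eta^{2/(\rho+1)}\cdot O(1)$. Dividing by $N$ and using $\eta^{2/(\rho+1)}\le\eta^{1/(\rho+1)}$ produces the $K^{1/(\rho+1)}\eta^{1/(\rho+1)}N^{-1}$ term. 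The main bookkeeping obstacle is calibrating the H\"older exponents so that the $K$-dependence, which enters only through Lemma \ref{primepolyrestriction} (Lemma \ref{primerestriction} being uniform in $K$), emerges with exponent exactly $1/(\rho+1)$; the Parseval factor for $\tilde a$ is of polylog size and absorbable into the implicit constant, and all other steps reduce to routine manipulations.
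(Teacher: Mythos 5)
Your proposal follows the broad shape of the paper's argument (Fourier-expand, split into large-spectrum and complement, use restriction estimates), but it has two genuine gaps.

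First, you conflate the two smoothings. In this lemma $a'=a*\beta*\beta$ with $\beta=\1_B/|B|$, where $B$ is the Bohr set built from $R=\{r:|\tilde a(r)|\ge\eta\}$; but $\a'$ is still the object from Section~2, namely $\a'=\a*\b*\b$ with $\b=\1_{\B}/|\B|$ and $\B$ built from $\RR=\{r:|\tilde\a(r)|\ge\eta\}$. These are different Bohr sets. Consequently the Fourier-side discrepancy factor is $1-\tilde\beta(r)^2\tilde\beta(-r)^2\tilde\b(r)^2$, not $1-\tilde\beta(r)^6$, and your telescoping line has the same defect: the third term $a'a'(\a-\a')$ contributes $1-\tilde\b(r)^2$, not $1-\tilde\beta(r)^2$. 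This forces the split to be over $R\cap\RR$ rather than over $R$ alone, since Green's $|1-\tilde\beta(r)^2|\le 32\epsilon^2$ is available only for $r\in R$ and $|1-\tilde\b(r)^2|\le 32\epsilon^2$ only for $r\in\RR$; with your split, for $r\in R\setminus\RR$ you have no control on $\tilde\b(r)^2$, and the $O(\epsilon^2)$ estimate fails.

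Second, and more seriously, in the H\"older step you take the $\tilde a$ factor as $\bigl(\sum_r|\tilde a(r)|^2\bigr)^{\rho/(\rho+1)}$ via Parseval and claim the resulting $(\log N)^{\rho/(\rho+1)}$ is ``absorbable into the implicit constant.'' It is not: $C_3$ must depend only on $k$, not on $N$, and $\sum_r|\tilde a(r)|^2=N\sum_x a(x)^2$ grows like $\log N$ because $a$ is a normalized prime indicator. The whole point of invoking the Bourgain--Green restriction estimate (Lemma~\ref{primerestriction}) in this place is to replace Parseval: one calibrates the H\"older exponents so that the $\tilde a$ factor becomes $\bigl(\sum_r|\tilde a(r)|^{2+1/\rho}\bigr)^{\rho/(\rho+1)}\le C'(2+1/\rho)^{\rho/(\rho+1)}$, which is a genuine $O_k(1)$. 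Concretely, one writes
\begin{align*}
\sum_{r\notin R\cap\RR}|\tilde a(r)|^2|\tilde\a(r)|
\le \sup_{r\notin R\cap\RR}|\tilde a(r)\tilde\a(r)|^{\frac1{\rho+1}}
\Bigl(\sum_r|\tilde a(r)|^{2+\frac1\rho}\Bigr)^{\frac{\rho}{\rho+1}}
\Bigl(\sum_r|\tilde\a(r)|^{\rho}\Bigr)^{\frac1{\rho+1}},
\end{align*}
and the sup is $\le O(\eta)^{1/(\rho+1)}$ because at least one of $|\tilde a(r)|,|\tilde\a(r)|$ is below $\eta$ off $R\cap\RR$ while the other is trivially $O(1)$. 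You cite Lemma~\ref{primerestriction} only in the $r\in R$ piece (for $|R|\le C'(\rho)\eta^{-\rho}$), but it is equally essential in the complementary piece; the Parseval shortcut does not close.
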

\begin{proof}
We have
\begin{align*}
&\sum_{\substack{1\leq x,y,z\leq N\\ x+y=z}}a(x)a(y)\a(z)-\sum_{\substack{1\leq x,y,z\leq N\\ x+y=z}}a'(x)a'(y)\a'(z)\\
=&\frac{1}{N}\sum_{r\in\Z_N}\tilde{a}(r)\tilde{a}(-r)\tilde{\a}(r)\bigg(1-\tilde{\beta}(r)^2
\tilde{\beta}(-r)^2\tilde{\b}(r)^2\bigg).
\end{align*}
Let $\rho=k2^{k+3}$. If $r\in R\cap\RR$, then by Lemma 6.7 of
\cite{Green05},
$$
|1-\tilde{\beta}(r)^2) \tilde{\beta}(-r)^2\tilde{\b}(r)^2|\leq
2^{15}\epsilon^2.
$$
It follows that
\begin{align*}
&\bigg|\sum_{r\in
R\cap\RR}\tilde{a}(r)\tilde{a}(-r)\tilde{\a}(r)\bigg(1-\tilde{\beta}(r)^2
\tilde{\beta}(-r)^2\tilde{\b}(r)^2\bigg)\bigg|\\
\leq&
2^{15}\epsilon^2\sum_{r\in R\cap\RR}|\tilde{a}(r)|^2|\tilde{\a}(r)|\\
\leq&2^{16}\epsilon^2\min\{|R|, |\RR|\}.
\end{align*}
And by Lemma \ref{primepolyrestriction}, we have $|R|\leq C'(\rho)\eta^{-\rho}$.
Also, by the H\"older inequality, Lemmas \ref{primepolyrestriction} and \ref{primerestriction},
\begin{align*}
&\bigg|\sum_{r\not\in
R\cap\RR}\tilde{a}(r)\tilde{a}(-r)\tilde{\a}(r)\bigg(1-\tilde{\beta}(r)^2
\tilde{\beta}(-r)^2\tilde{\b}(r)^2\bigg)\bigg|\\
\leq& 2\sup_{r\not\in
R\cap\RR}|\tilde{a}(r)\tilde{\a}(r)|^\frac1{\rho+1}
\bigg(\sum_{r\not\in
R\cap\RR}|\tilde{a}(r)|^{2+\frac{1}{\rho}}\bigg)^{\frac{\rho}{\rho+1}}
\bigg(\sum_{r\not\in R\cap\RR}|\tilde{\a}(r)|^\rho\bigg)^{\frac1{\rho+1}}\\
\leq&2C'(2+1/\rho)^{\frac1{\rho+1}}C(\rho)^{\frac1{\rho+1}}K^\frac1{\rho+1}\eta^\frac1{\rho+1}.
\end{align*}
\end{proof}

\begin{Lem}
\label{adashbound} If $\epsilon^{|R|}\geq 2\log\log w/w$, then
$|a'(x)|\leq2/N$ for any $x\in\Z_N$.
\end{Lem}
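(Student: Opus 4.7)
The plan is to mirror the proof of Lemma~\ref{aadashbound}, with the $L^{\rho}$-restriction bound of Lemma~\ref{primerestriction} taking over the role played earlier by the uniform Fourier bound of Lemma~\ref{tildebound}. By Fourier inversion on $\Z_{N}$,
\[
a'(x)=\frac{1}{N}\sum_{r\in\Z_{N}}\tilde{a}(r)\tilde{\beta}(r)^{2}e(xr/N),
\]
so I would bound the right-hand side by splitting the frequencies $r$ into three parts: $r=0$, $r\in R\setminus\{0\}$, and $r\notin R$.

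The contribution of $r=0$ equals $\tilde{\beta}(0)^{2}\tilde{a}(0)/N$. Since $\tilde{\beta}(0)=1$ and $\tilde{a}(0)=N^{-1}\sum_{x}\1_{A}(x)\lambda_{\psi(b)/2,KW}(x)\le 1+o(1)$ by the Siegel--Walfisz theorem (applicable because $(\psi(b)/2,KW)=1$), this main term contributes at most $(1+o(1))/N$. For $r\in R^{c}$, the defining bound $|\tilde{a}(r)|<\eta$ combined with Parseval's identity $\sum_{r}|\tilde{\beta}(r)|^{2}=N/|B|$ and the pigeonhole estimate $|B|\ge\epsilon^{|R|}N$ (cf.~\cite[Lemma 1.4]{Tao}) gives a contribution at most $\eta/|B|\le \eta w/(2N\log\log w)$, where the last step is the hypothesis.

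The main obstacle is the middle portion $r\in R\setminus\{0\}$, where no uniform Fourier bound like Lemma~\ref{tildebound} is available. I would invoke Lemma~\ref{primerestriction} with $\rho=k2^{k+3}$ (matching the choice in Lemma~\ref{minorprime}) in two complementary ways: Chebyshev's inequality gives $|R|\le C'(\rho)\eta^{-\rho}$, and H\"older's inequality together with the trivial bound $|\tilde{\beta}(r)|\le 1$ gives
\[
\sum_{r\in R\setminus\{0\}}|\tilde{a}(r)||\tilde{\beta}(r)|^{2}\le |R|^{(\rho-1)/\rho}\Bigl(\sum_{r}|\tilde{a}(r)|^{\rho}\Bigr)^{1/\rho}\le C'(\rho)\eta^{-(\rho-1)},
\]
contributing at most $C'(\rho)\eta^{-(\rho-1)}/N$ to $|a'(x)|$. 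The delicate step is the simultaneous calibration of the free parameters $\eta$ and $\epsilon$: the $R^{c}$-term is small when $\eta$ is small, while the $R$-term is small when $\eta$ is not too small, and the hypothesis $\epsilon^{|R|}\ge 2\log\log w/w$ is tailored precisely so that, for the choices of $\eta$ and $\epsilon$ to be made later in the main argument, both error terms fit inside the slack $(2-(1+o(1)))/N$. Summing the three pieces then yields $|a'(x)|\le 2/N$ as claimed.
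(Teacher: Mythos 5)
Your decomposition breaks down on the middle piece $r\in R\setminus\{0\}$, and the gap is not repairable within the framework you set up. You bound its contribution by $C'(\rho)\eta^{-(\rho-1)}/N$, which tends to infinity as $\eta\to 0$. But the argument that follows Lemma~\ref{adashbound} in Section~3 forces $\eta$ to be small (so that $C_3K^{1/(\rho+1)}\eta^{1/(\rho+1)}\leq\kappa^6/2$), so the two requirements collide: the $R^c$-term wants $\eta$ small, the $R$-term wants $\eta$ bounded below, and no choice of $\eta,\epsilon$ can satisfy both while keeping the total below the slack $O(1/N)$. The underlying difficulty is that the restriction estimate of Lemma~\ref{primerestriction} only controls $|R|$ and $\ell^{\rho}$-averages of $\tilde{a}$; on $R$ itself the only available pointwise bound on $|\tilde{a}(r)|$ is the trivial $|\tilde{a}(r)|\leq\tilde{a}(0)\approx 1$, and no rearrangement of H\"older exponents recovers a factor of $1/N$ from the large Fourier coefficients.

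The device in Green's Lemma~6.3 (which the paper's proof invokes by reference), and which your outline never uses, is a pointwise majorant together with a \emph{uniform} Fourier bound on that majorant rather than a level-$\rho$ restriction estimate. Since $\1_A\leq 1$, one has $a(x)\leq\nu(x):=\lambda_{\psi(b)/2,KW}(x)/N$, and since $\beta\geq 0$,
$$
0\leq a'(x)=a*\beta*\beta(x)\leq\nu*\beta*\beta(x)\leq\frac1N\sum_{r\in\Z_N}|\tilde{\nu}(r)|\,|\tilde{\beta}(r)|^2.
$$
The $W$-trick ensures $|\tilde{\nu}(r)|\leq\log\log w/w$ (up to constants) uniformly for all $r\neq 0$ --- this is a Vinogradov-type exponential sum over primes in a fixed residue class, and it is exactly where the shape $\log\log w/w$ of the hypothesis comes from. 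Combining with $\tilde{\nu}(0)=1+o(1)$, $\tilde{\beta}(0)=1$, Parseval $\sum_r|\tilde{\beta}(r)|^2=N/|B|$, and the pigeonhole bound $|B|\geq\epsilon^{|R|}N\geq(2\log\log w/w)N$, one gets $a'(x)\leq\bigl(1+o(1)+\tfrac12\bigr)/N\leq 2/N$. Your decomposition by $R$ versus $R^c$ is the right move in Lemma~\ref{aadashbound}, where a uniform bound on $|\tilde{\a}(r)|$ for $r\neq0$ is available (Lemma~\ref{tildebound}); for $a$, no such uniform bound on $\tilde{a}$ exists, which is precisely why one passes to the majorant $\nu$ first.
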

\begin{proof}
See \cite[Lemma 6.3]{Green05}.
\end{proof}
Let
$$
A'=\{x\in\Z_N:\,a'(x)\geq\kappa/N\},\quad \A=\{x\in\Z_N:\,\a'(x)\geq\kappa/N\}.
$$
Then by the proof of Lemma \ref{major} we have $|\A|\geq
(1-3\kappa)N$. By Lemma \ref{adashbound} we have
$$
\frac{2}{N}|A'|+\frac{\kappa}{N}(N-|A'|)\geq\sum_{x\in\Z_N}a'(x)=\sum_{x\in\Z_N}a(x)\geq
\frac{1}{3mK},
$$
$$
|A'|\geq\frac{N}{2}\bigg(\sum_{x\in\Z_N}a'(x)-\frac{\kappa}{N}\cdot
N\bigg)=
\frac{N}{2}\bigg(\sum_{x\in\Z_N}a(x)-\frac{\kappa}{N}\cdot
N\bigg)\geq2\kappa N.
$$
Hence by Lemma 3.3 of \cite{LiPan1},
\begin{align*}
&\sum_{\substack{1\leq x,y,z\leq N\\ x+y=z}}a'(x)a'(y)\a'(z)\geq
\sum_{\substack{x,y\in A', z\in\A\\ x+y=z}}a'(x)a'(y)\a'(z)\geq \frac{\kappa^3}{N^3}\nu_{A',A',-\A}(0)\geq\frac{\kappa^6}{N}.
\end{align*}
We may choose sufficiently small $\eta$ and $\epsilon$ such that $$\epsilon^{C(k2^{k+3})K\eta^{-k2^{k+3}}}\geq
\kappa^{-1}C_1Kw^{-\frac{1}{k(k+3)}},$$
$$\epsilon^{C'(k2^{k+3})\eta^{-k2^{k+3}}}\geq
2\log\log w/w$$
and
$$
C_3K^{\frac{1}{\rho+1}}(\epsilon^2\eta^{-k2^{k+3}}+\eta^{\frac{1}{k2^{k+3}+1}})\leq
\kappa^6/2.
$$
So by Lemma \ref{minorprime}, we have
\begin{align*}
&\frac{\phi(KW)^2(\log(KWN+\psi(b)))^2}{K^2W^2N^2}\sum_{\substack{x,y\in A,\ 1\leq z\leq N\\ x\not=y,\ x+y=z}}\a(z)\\
\geq& \sum_{\substack{x,y,z\in\Z_N\\
x+y=z}}a'(x)a'(y)\a'(z)-C_3K^\frac1{\rho+1}(\epsilon^2\eta^{-k2^{k+3}}+\eta^{\frac{1}{k2^{k+3}+1}})N^{-1}\\
-&\frac{\phi(KW)^2(\log(KWN+\psi(b)))^2}{K^2W^2N^2}\sum_{\substack{1\leq z\leq N}}\a(z)\\
\geq&\kappa^6N^{-1}-C_3K^\frac1{\rho+1}(\epsilon^2\eta^{-k2^{k+3}}+\eta^{\frac{1}{k2^{k+3}+1}})N^{-1}-N^{-\frac{3}{2}}\\
\geq&\frac{\kappa^6}{3N}.
\end{align*}
\qed

\begin{Ack}
The second author thanks Professor Zhi-Wei Sun for informing the result of
Khalfalah and Szemer\'edi.
\end{Ack}

\end{document}